\numberwithin{equation}{section}
\newcommand{\struc}[1]{\langle #1 \rangle}
\newcommand{\class}[1]{\mathcal{#1}}
\newcommand{\var}[1]{\class{#1}}
\newcommand{\spec}[1]{{\mathrm{Spec}(#1)}}
\newcommand{\topo}[1]{\mathbf{#1}}
\newcommand{\alg}[1]{\mathbf{#1}}
\newcommand{\Upb}[1]{\mathrm{\mathbf{Up}}(#1)}
\newcommand{\Clo}[1]{\mathrm{\mathbf{Cl}_0}(#1)}
\newcommand{\me}{\mathrm{\bf m}}
\def\tkzscl{0.8} %Tikz scale factor
\title{Conservative median algebras and semilattices}
\author{Miguel Couceiro \and Jean-Luc Marichal \and Bruno Teheux }
\institute{Miguel Couceiro \at LORIA (CNRS - Inria Nancy Grand Est - Université de Lorraine), \' Equipe Orpailleur,
Batiment B, Campus Scientifique,   B.P. 239,  
 F-54506 Vandoeuvre les Nancy. \email{miguel.couceiro@inria.fr}
\and 
Jean-Luc Marichal \at
Mathematics Research Unit, FSTC, University of Luxembourg 
6, rue Coudenhove-Kalergi, L-1359 Luxembourg, Luxembourg. \email{jean-luc.marichal@uni.lu }
\and
Bruno Teheux \at
Mathematics Research Unit, FSTC, University of Luxembourg 
6, rue Coudenhove-Kalergi, L-1359 Luxembourg, Luxembourg. \email{bruno.teheux@uni.lu }}
\begin{document}
%\linenumbers

\maketitle

\begin{abstract}
We characterize conservative median algebras and semilattices by means  of forbidden substructures and by providing their representation as chains. Moreover, using a dual equivalence between median algebras and certain   topological structures, we obtain descriptions of the median-preserving mappings  between products of finitely many chains.    
\end{abstract}

\section{Introduction and preliminaries}

In this paper we are interested in certain algebraic structures called median algebras. 
A \emph{median algebra} is a ternary algebra $\alg{A}=\struc{A, \me}$  that satisfies the following equations
\begin{gather*}
\me(x,x,y)=x,\\
\me(x,y,z)=\me(y,x,z)=\me(y,z,x),\\
\me(\me(x,y,z), t, u)=\me(x,\me(y,t,u), \me(z,t,u)).
\end{gather*}

Median algebras have been investigated by several authors (see  \cite{Birkhoff1947,Grau1947} {for early} references  on median algebras and see \cite{Bandelt1983,Isbell1980} for some surveys). 
For instance, it is shown in \cite{Sholander1954} that  for each element $a$ of a median algebra $\alg{A}$, the relation $\leq_a$ defined on $A$ by
\[
x \leq_a y \quad \iff \quad \me(a,x,y)=x
\]
is a $\wedge$-semilattice order with bottom element $a$. The associated operation  $\wedge$ is  defined by $x\wedge y =\me(a,x,y)$. Semilattices constructed in this way are called \emph{median semilattices}, and can be characterized as follows.

\begin{theorem}[(3.1) in  \cite{Sholander1954}]\label{thm:semi}
A $\wedge$-semilattice is a median semilattice if and only if each of its principal ideal is a distributive lattice, and any three elements have a join whenever each pair of them is bounded above. 
\end{theorem}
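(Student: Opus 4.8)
The plan is to prove the two implications separately, using the explicit operation
\[
\me(x,y,z) = (x\wedge y)\vee(y\wedge z)\vee(z\wedge x)
\]
as the bridge between the semilattice and the median-algebra pictures, together with the basic fact that every interval $[a,b] = \{x : \me(a,x,b)=x\}$ of a median algebra is a distributive lattice, with meet $\me(a,\cdot\,,\cdot)$ and join $\me(b,\cdot\,,\cdot)$. I will also use repeatedly that $\me(a,u,v)=u$ is exactly $u\leq_a v$, and that the median of a comparable triple equals its middle term.

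For necessity, suppose $\langle S,\wedge\rangle$ arises from a median algebra $\alg{A}$ with basepoint $a$, so that $x\wedge y=\me(a,x,y)$ and $\leq$ is $\leq_a$. Then each principal ideal $\downarrow c$ is exactly the interval $[a,c]$, which is a distributive lattice by the fact above, giving the first condition. For the second, let $x,y,z$ be pairwise bounded, say $x,y\leq p$, $y,z\leq q$ and $z,x\leq r$. I would show that $m:=\me(p,q,r)$ is a common upper bound of $x,y,z$: expanding $\me(a,x,m)$ by the third median identity and using $\me(a,u,v)=u$ whenever $u\leq v$ reduces it to the median of a comparable triple, which equals its middle term $x$; the cases of $y$ and $z$ are analogous. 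Once a common upper bound $m$ is available, the join $x\vee y\vee z$ exists in the lattice $[a,m]$, and replacing any global upper bound $w$ by $w\wedge m\in[a,m]$ shows this join is in fact the least upper bound in all of $S$.

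For sufficiency, assume the two conditions and define $\me$ by the displayed formula. It is well defined because the three meets $x\wedge y$, $y\wedge z$, $z\wedge x$ are pairwise bounded (by $y$, $z$, $x$ respectively), so their join exists by the second condition. The identity $\me(x,x,y)=x$ and the full symmetry of $\me$ are immediate from the formula. When $S$ has a least element $\bot$ (the basepoint of the resulting median semilattice), one checks directly that $\me(\bot,x,y)=x\wedge y$ and hence that $\leq_\bot$ coincides with the given order, so that the construction indeed returns $\langle S,\wedge\rangle$.

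The main obstacle is verifying the third median identity $\me(\me(x,y,z),t,u)=\me(x,\me(y,t,u),\me(z,t,u))$ for $\me$ defined on the semilattice, since $S$ is only \emph{locally} a lattice. The strategy is to reduce every meet and join occurring on both sides to a single principal ideal, where the identity becomes the standard median identity valid in any distributive lattice. Concretely, I would first locate a common upper bound for the finitely many elements that actually appear (again produced by iterating the second condition and the median-of-pairwise-bounds construction from the necessity part), and then carry out the computation inside that distributive ideal. The delicate points are to confirm that each join written in the two expressions genuinely exists in $S$ before equating them, and that passing to the principal ideal changes none of these meets and joins; once this bookkeeping is done, distributivity finishes the proof.
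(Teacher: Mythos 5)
The paper itself gives no proof of this statement---it is imported verbatim from Sholander---so your proposal has to stand on its own. Your necessity direction is essentially correct: the principal ideals of $\struc{A,\leq_a}$ are the intervals $[a,c]$, hence distributive lattices, and your computation $\me(a,x,\me(p,q,r))=\me(p,\me(q,a,x),\me(r,a,x))$, reduced by $r\wedge x=x$ to the median of the chain $q\wedge x\leq x\leq p$, correctly shows that $\me(p,q,r)$ is a common upper bound of $x,y,z$; the passage from the join in $[a,m]$ to the join in all of $S$ via $w\mapsto w\wedge m$ is also fine.

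The gap is in the sufficiency direction, exactly at the step you flag as delicate: verifying $\me(\me(x,y,z),t,u)=\me(x,\me(y,t,u),\me(z,t,u))$. Your plan---``locate a common upper bound for the finitely many elements that actually appear'' and compute inside the resulting distributive principal ideal---cannot work as stated. Condition (ii) only yields joins of triples that are \emph{pairwise} bounded above; it cannot manufacture upper bounds that do not exist, and a median semilattice is in general very far from being a lattice. In the tree consisting of a root $a$ with three pairwise incomparable atoms $b,c,d$, both hypotheses of the theorem hold, yet $b$ and $c$ have no common upper bound, so no principal ideal contains all of $x,y,z,t,u$ when these are instantiated among the atoms; the identity must nevertheless be verified for such instances. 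Even the weaker reduction---bounding only the six meets $w\wedge t$, $t\wedge u$, $u\wedge w$ and $x\wedge s_1$, $s_1\wedge s_2$, $s_2\wedge x$ (where $w=\me(x,y,z)$ and $s_i=\me(\,\cdot\,,t,u)$)---is not justified: each side's three meets are pairwise bounded by construction, which is why each side is well defined, but a single element dominating all six is precisely what your argument needs and what you have not produced. So the hardest part of the theorem is asserted rather than proved. A complete argument has to proceed more locally, e.g.\ by first deriving auxiliary identities such as $\me(x,y,\me(x,y,z))=\me(x,y,z)$ and absorption laws holding inside the separate ideals $\mathord{\downarrow}x$, $\mathord{\downarrow}y$, $\mathord{\downarrow}z$, and only then assembling the full axiom---this is essentially the route Sholander and Avann take. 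A secondary, smaller issue: your closing remark treats only the case where $S$ has a least element, whereas the theorem's hypotheses do not guarantee one; you should say explicitly in which sense an unbounded-below semilattice (e.g.\ the chain of integers) is being called a median semilattice, since $\leq_a$ always has bottom element $a$.
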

In particular, any distributive lattice is a median semilattice. According to Theorem \ref{thm:semi}, we can define a ternary operation $\me_\leq$ called the \emph{median operation of $\leq$} on every median semilattice $\struc{A, \leq}$ by setting
\begin{equation}\label{eqn:intro}
\me_\leq(x,y,z)=(x\wedge y)\vee (x\wedge z)\vee (z\wedge y),
\end{equation}
for every $x,y,z \in A$.
It can be proved \cite[Lemma 3 (6)]{Avann1961} that $\me=\me_{\leq_a}$ for every median algebra $\alg{A}=\struc{A, \me}$, and every $a\in A$.

%%%% I think that the following is not used in the paper.
%Result (10.1) in \cite{Sholander1952}  (see also \cite[p. 137, Theorem 4]{Birkhoff1948}) states that if the median algebra $\alg{A}$ contains two elements $0$ and $1$ such that $\me(0,x,1)=x$ for every $x \in A$, then $(A, \leq_0)$ is a distributive lattice order bounded by $0$ and $1$, and where  $x\wedge y$ and   $x\vee y$ are given by  $\me(x,y,0)$ and  $\me(x,y,1)$, respectively. 
%%%% The following is redudent since a distibutive lattice is a median semilattice 
%Conversely, if $\alg{L}=\struc{L, \vee, \wedge}$ is a distributive lattice, then the term function defined by \eqref{eqn:intro} is denoted by $\me_{\alg{L}}$ and give rises to a median algebra on $L$, called the \emph{median algebra associated with $\alg{L}$}. 
% It is noteworthy that  equations satisfied by median algebras of the form $\struc{L, \me_\leq}$ where $\struc{L, \leq}$ is a bounded distributive lattice are exactly the same as those satisfied by median algebras. For further background see, e.g.,  \cite{Bandelt1983}.

Here, we are particularly interested in median algebras $\alg{A}$ that are \emph{conservative}, i.e.,  that satisfy 
\begin{equation}\label{eqn:cons}
\me(x,y,z)\in \{x,y,z\}, \qquad x, y, z \in A.
\end{equation}
Although condition \eqref{eqn:cons} appears in \S 11 of  \cite{Sholander1952},  to the best of the authors' knowledge, the present work constitutes the first attempt of a systematic study of conservative median algebras.
A median semilattice $\struc{A, \leq}$ whose median operation $\me_\leq$ satisfies \eqref{eqn:cons}  is called a \emph{conservative median semilattice}.
Note that  a median algebra is conservative if and only if each of its subsets  is a median subalgebra. Moreover, if $\alg{L}$ is a chain, then $\me_{\alg{L}}$ satisfies (\ref{eqn:cons}); however the converse is not true. This fact was observed in \S 11 of \cite{Sholander1952}, which presents the median operation of the four element Boolean algebra as a counter-example.

In this paper, we investigate conservative median algebras and homomorphisms between them, i.e., mappings $f\colon \alg{A}\to\alg{B}$ that are solutions of the functional equation
\begin{equation}
f(\me(x,y,z)) \ = \ \me(f(x), f(y), f(z)).
\end{equation}
We describe such homomorphisms between conservative median algebras $\alg{A}$ and $\alg{B}$. To do so, we present a description of conservative median algebras and semilattices in terms of forbidden substructures (in complete analogy with \textsc{Birkhoff}'s characterization of distributive lattices with $M_5$ and $N_5$ as forbidden substructures),
and that leads to
 a representation of conservative median algebras (with at least five elements) as  median algebras of chains. In fact, the only conservative median algebra that is not representable as a chain is the median algebra of the four element Boolean algebra.% (Theorem \ref{thm:phj}). 

Throughout the paper we employ the  following notation. For each positive integer $n$, we set $[n]=\{1, \ldots, n\}$. Algebras and topological structures are denoted by bold roman capital letters $\alg{A}, \alg{B}, \alg{X}, \alg{Y}, \ldots$ and their universes by italic roman capital letters $A, B, X, Y,\ldots$
To simplify our presentation, we will keep  the introduction of background to a minimum, and  we will assume that the reader is familiar with the theory of lattices and ordered sets. We refer the reader to \cite{Davey2002,Gratzer2003} for further background.  
%To improve the readability of the paper, we adopt the rather unusual convention that in any distributive lattice the empty set is a prime filter and a prime ideal.

%%%% Characterization of conservative blalbkaba
\section{Characterizations of conservative median algebras}
\label{sec:ConsMed}

According to Theorem \ref{thm:semi}, a semilattice can fail to be a conservative median semilattice in tree different ways. First, it can contain a principal ideal which is not a distributive lattice, as in Fig.\ \ref{fig:lat02_00} that depicts 
the bounded lattice $N_5$  that is  not distributive. Second, it can contain three elements $b,c,d$ that do not have a join even though every pair of them is bounded above, such as in Fig.\  \ref{fig:lat02_00bis}. Finally, it can be a median semilattice that is not  conservative, like $\alg{A}_2$  in Fig.\ \ref{fig:bcv} in which $\me_\leq (a,c,d)=b$, and like in Fig.\ \ref{fig:lat04_00}--\ref{fig:lat03_01} in which the semilattices contain a copy of $\alg{A}_2$. Hence, we have proved the following lemma.

\begin{lemma}\label{lem:spo}
The partially ordered sets $\alg{A}_1, \ldots, \alg{A}_5$ depicted in Fig.\ \ref{fig:lat} are not conservative median semilattices. 
\end{lemma}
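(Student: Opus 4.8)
The plan is to read Theorem~\ref{thm:semi} together with \eqref{eqn:cons} as a three-part checklist for conservative median semilattices and then test each $\alg{A}_i$ of Fig.~\ref{fig:lat} against it. Explicitly, a $\wedge$-semilattice is a conservative median semilattice exactly when (C1) every principal ideal is a distributive lattice, (C2) any three elements that are pairwise bounded above have a join, and (C3) the operation $\me_\leq$ of \eqref{eqn:intro} satisfies \eqref{eqn:cons}. Clauses (C1) and (C2) are Sholander's two conditions for being a median semilattice, in which case $\me_\leq$ is the median operation, and (C3) is then conservativeness. So the proof is a finite case analysis: for each $\alg{A}_i$ I would exhibit one violated clause.

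First I would dispose of the posets that fail to be median semilattices at all. Any $\alg{A}_i$ possessing a principal ideal isomorphic to the non-distributive lattice $N_5$ of Fig.~\ref{fig:lat02_00} violates (C1); in particular $N_5$ is its own top principal ideal and is ruled out this way. Any $\alg{A}_i$ containing three elements $b,c,d$ that are pairwise bounded above yet have no common join, as in Fig.~\ref{fig:lat02_00bis}, violates (C2). In both subcases it suffices to point to the offending ideal or triple and apply Theorem~\ref{thm:semi}.

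For the posets that do satisfy (C1) and (C2), and are therefore median semilattices, I would refute conservativeness (C3) by computing $\me_\leq$ on a single triple. The model is $\alg{A}_2$ of Fig.~\ref{fig:bcv}, in which $a$ is the bottom, $c$ and $d$ are incomparable, and $b=c\wedge d$ lies strictly above $a$; then $a\wedge c=a\wedge d=a$, so \eqref{eqn:intro} gives
\[
\me_\leq(a,c,d)=(a\wedge c)\vee(a\wedge d)\vee(c\wedge d)=a\vee a\vee b=b\notin\{a,c,d\},
\]
contradicting \eqref{eqn:cons}. Each larger semilattice in Fig.~\ref{fig:lat04_00}--\ref{fig:lat03_01} carries this same four-element configuration, and I would reuse the triple $a,c,d$, recomputing $\me_\leq(a,c,d)$ from the meets and joins of the ambient poset.

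The delicate step is precisely this recomputation. Since $\me_\leq(a,c,d)$ is built from meets and joins evaluated in $\alg{A}_i$ rather than in the embedded $\alg{A}_2$, I must verify that $a\wedge c$, $a\wedge d$, $c\wedge d$, and the subsequent join are left unchanged by the additional elements; equivalently, that the copy of $\alg{A}_2$ sits in $\alg{A}_i$ as a subsemilattice closed under the operations occurring in \eqref{eqn:intro}. Reading this off each diagram, the triple $a,c,d$ still witnesses a failure of \eqref{eqn:cons} in $\alg{A}_i$, which completes the case analysis and proves the lemma.
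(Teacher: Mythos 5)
Your overall strategy is exactly the paper's: the discussion preceding the lemma dispatches $\alg{A}_1$ via a non-distributive principal ideal, $\alg{A}_5$ via a pairwise-bounded triple with no join, $\alg{A}_2$ via the computation $\me_\leq(a,c,d)=b$, and $\alg{A}_3,\alg{A}_4$ by observing that they ``contain a copy of $\alg{A}_2$''. Your treatment of $\alg{A}_1$, $\alg{A}_2$, $\alg{A}_3$ and $\alg{A}_5$ is correct, and you are right to single out as the delicate step the fact that the meets and joins in \eqref{eqn:intro} must be re-evaluated in the ambient poset.

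However, the blanket assertion with which you close that step fails for $\alg{A}_4$ (Fig.~\ref{fig:lat03_01}). That poset is the diamond $a<b,c<d$ with an extra top $d'$ above $d$; it contains \emph{no} subposet order-isomorphic to $\alg{A}_2$ in the upward orientation (every four-element subset is a chain, a diamond, or the order-dual of $\alg{A}_2$), and the dual copy $\{b,c,d,d'\}$ is not closed under $\wedge$, since $b\wedge c=a$. So there is no triple playing the role of your $a,c,d$, and the literal triple $a,c,d$ of the figure gives $\me_\leq(a,c,d)=(a\wedge c)\vee(a\wedge d)\vee(c\wedge d)=a\vee a\vee c=c\in\{a,c,d\}$, which witnesses nothing. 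The case is still easy: $\alg{A}_4$ is a distributive lattice, hence a median semilattice, and
\[
\me_\leq(b,c,d')=(b\wedge c)\vee(b\wedge d')\vee(c\wedge d')=a\vee b\vee c=d\notin\{b,c,d'\},
\]
so it is not conservative --- but the witness is the triple $b,c,d'$, not a transported copy of the triple from $\alg{A}_2$. (To be fair, the paper's own phrase ``contain a copy of $\alg{A}_2$'' is loose in exactly the same way: the copy sitting inside $\alg{A}_4$ is a median subalgebra isomorphic to that of $\alg{A}_2$, not a subposet. Your proof is complete once the $\alg{A}_4$ computation is replaced by the one above.)
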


\begin{figure}
\begin{center}
\subfigure[$\alg{A}_1$]{\label{fig:lat02_00} \begin{tikzpicture}[scale=\tkzscl]
\draw (-0.5,0.5) node{$\bullet$}node[left]{$d'$};
\draw (0,0) node{$\bullet$} node[below]{$a$}--(-1,1)node{$\bullet$} node[left]{$b$};
\draw (0,0)--(1,1) node{$\bullet$} node[right]{$c$};
\draw (1,1)--(0,2) node{$\bullet$} node[above]{$d$};
\draw (-1,1)--(0,2);
\end{tikzpicture}}
\subfigure[$\alg{A}_2$]{\label{fig:bcv}\begin{tikzpicture}[scale=\tkzscl]
\draw (0,0) node{$\bullet$}node[below]{$a$}--(0,1)node{$\bullet$}node[right]{$b$};
\draw (0,1)--(-1,2)node{$\bullet$}node[left]{$c$};
\draw (0,1)--(1,2)node{$\bullet$}node[right]{$d$};
\end{tikzpicture}}\\
\subfigure[$\alg{A}_3$]{\label{fig:lat04_00}\begin{tikzpicture}[scale=\tkzscl]
\draw (-1,1)--(-1,2) node{$\bullet$}node[above]{$d'$};
\draw (0,0) node{$\bullet$} node[below]{$a$}--(-1,1)node{$\bullet$} node[left]{$b$};
\draw (0,0)--(1,1) node{$\bullet$} node[right]{$c$};
\draw (1,1)--(0,2) node{$\bullet$} node[above]{$d$};
\draw (-1,1)--(0,2);
\end{tikzpicture}}
\subfigure[$\alg{A}_4$]{\label{fig:lat03_01} \begin{tikzpicture}[scale=\tkzscl]
\draw (0,2)--(0,2.75) node{$\bullet$}node[right]{$d'$};
\draw (0,0) node{$\bullet$} node[below]{$a$}--(-1,1)node{$\bullet$} node[left]{$b$};
\draw (0,0)--(1,1) node{$\bullet$} node[right]{$c$};
\draw (1,1)--(0,2) node{$\bullet$} node[right]{$d$};
\draw (-1,1)--(0,2);
\end{tikzpicture}}
\subfigure[$\alg{A}_5$]{\label{fig:lat02_00bis} \begin{tikzpicture}[scale=\tkzscl]
\draw (0,0) node{$\bullet$} node[below]{$a$}--(-1,1)node{$\bullet$} node[left]{$b$};
\draw (-1,1) -- (-1,2) node{$\bullet$};
\draw (1,1) -- (1,2) node{$\bullet$};
\draw (0,1) node{$\bullet$} node[right]{c}-- (-1,2);
\draw (0,0)  -- (0,1) -- (1,2);
\draw (0,0)--(1,1) node{$\bullet$} node[right]{$d$};
\draw (1,1)--(0,2) node{$\bullet$};
\draw (-1,1)--(0,2);
\end{tikzpicture}}
\end{center}
\caption{Examples of $\wedge$-semilattices that are not conservative.}\label{fig:lat}
\end{figure}
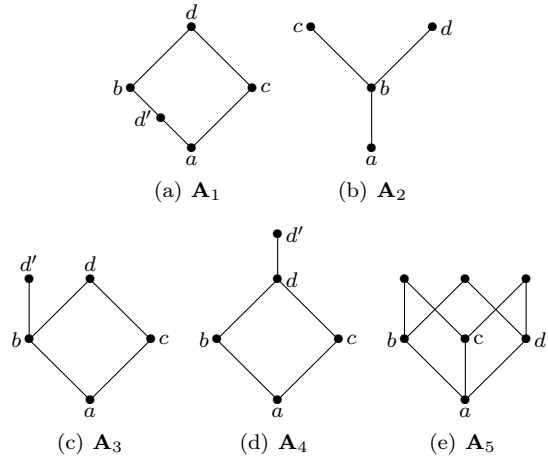

The following theorem provides a description of conservative median algebras and semilattices in terms of forbidden substructures. 

\begin{lemma}
The variety of median algebras satisfies the following equations.
\begin{gather}
\me(x,y,z) = \me\big(\me\big(\me(x,y,z),x,t\big),\me\big(\me(x,y,z),z,t\big),\me\big(\me(x,y,z),y,t\big) \big).\label{eqn:the}
\\
\me(x,y,\me(x,y,z))  = \me(x,y,z),\label{eqn:prem}
\end{gather}
\end{lemma}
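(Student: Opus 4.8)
The plan is to reduce both identities to a finite verification in the two-element median algebra. The key tool I would use is the classical fact (see the surveys \cite{Bandelt1983,Isbell1980}) that the variety of median algebras is generated by the two-element median algebra $\alg{2}=\struc{\{0,1\},\me}$, where $\me$ is the ternary majority operation; equivalently, every median algebra embeds into a power of $\alg{2}$. Consequently an equation holds throughout the variety if and only if it holds in $\alg{2}$, and it suffices to check \eqref{eqn:the} and \eqref{eqn:prem} under all assignments of the variables into $\{0,1\}$. The two elementary facts I will use repeatedly are that, on $\{0,1\}$, one has $\me(x,y,w)=x$ for every $w$ when $x=y$, and $\me(x,y,w)=w$ for every $w$ when $x\neq y$.

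For \eqref{eqn:prem} I would fix an assignment of $x,y,z$ in $\{0,1\}$. If $x=y$, then $\me(x,y,w)=x$ for every $w$, so both sides equal $x$. If $x\neq y$, then $\me(x,y,w)=w$ for every $w$; taking $w=\me(x,y,z)$ gives $\me(x,y,\me(x,y,z))=\me(x,y,z)$ at once. In either case \eqref{eqn:prem} holds in $\alg{2}$, hence everywhere.

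For \eqref{eqn:the} I would again fix an assignment in $\{0,1\}$ and set $m=\me(x,y,z)$, distinguishing two cases according to the value of $t$. If $t=m$, then each inner term satisfies $\me(m,x,t)=\me(m,x,m)=m$, and likewise $\me(m,z,t)=\me(m,y,t)=m$, so the right-hand side equals $\me(m,m,m)=m$. If $t\neq m$, i.e.\ $t=1-m$, then $m$ and $t$ are the two distinct elements of $\{0,1\}$, so $\me(m,w,t)=w$ for every $w$; hence the three inner terms are $x$, $z$, $y$, and the right-hand side equals $\me(x,z,y)=\me(x,y,z)=m$. In both cases the right-hand side reduces to $m=\me(x,y,z)$, the left-hand side, so \eqref{eqn:the} holds in $\alg{2}$.

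The only delicate point is the appeal to the generation of the variety by $\alg{2}$; once this is granted, both verifications are immediate finite case analyses. Should one instead want a self-contained, purely equational derivation from the three defining axioms, the hard part will be that the high symmetry of these configurations makes naive substitutions into the distributive axiom $\me(\me(x,y,z),t,u)=\me(x,\me(y,t,u),\me(z,t,u))$ collapse to tautologies; one would then have to combine idempotency and commutativity with a carefully chosen substitution (for \eqref{eqn:prem}, rewriting $\me(x,y,\me(x,y,z))$ as $\me(\me(x,y,z),x,y)$ and simplifying the resulting inner medians) in order to avoid this collapse.
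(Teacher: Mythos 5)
Your proof is correct and follows essentially the same route as the paper: both reduce the identities to the two-element median algebra $\alg{2}$ via the fact that every median algebra embeds into a power of $\alg{2}$, and then verify the equations there (the paper leaves the finite verification implicit, whereas you spell out the case analysis).
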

\begin{proof}
Every median algebra is isomorphic to a subalgebra of a power of the median algebra $\alg{2}=\struc{\{0,1\}, \me}$, where $\me$ is the majority ternary operation on $\{0,1\}$ (see \cite[Theorem 1.5]{Bandelt1983}). Moreover, equations \eqref{eqn:prem} and \eqref{eqn:the} are satisfied in $\alg{2}$.
\end{proof}

\begin{theorem}\label{prop:fgt}
For every median algebra $\alg{A}$, the following conditions are equivalent.
\begin{enumerate}
\item\label{it:pollo01} $\alg{A}$ is conservative.
\item \label{it:polnewbis} $\alg{A}$ does not contain the median algebra $\alg{A}_2$ depicted in Fig.\ \ref{fig:bcv} as a subalgebra. 
\item\label{it:polnew} For every $a\in A$, the median semilattice $\struc{A, \leq_a}$ does not contain a copy of the poset depicted in Fig.\  \ref{fig:bcv}.
%\item\label{it:pol02} There is an $a \in A$ and lower bounded chains $\alg{C}_0$ and $ \alg{C}_1$  such that $\struc{A, \leq_a}$ is isomorphic to $\alg{C}_0 \bot \alg{C}_1$.
%\item\label{it:pol03} For every $a \in A$, there are lower bounded chains $\alg{C}_0$ and $ \alg{C}_1$  such that $\struc{A, \leq_a}$ is isomorphic to $\alg{C}_0 \bot \alg{C}_1$.
\end{enumerate} 
\end{theorem}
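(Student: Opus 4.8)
The plan is to prove the cycle of implications $(\ref{it:pollo01})\Rightarrow(\ref{it:polnewbis})\Rightarrow(\ref{it:polnew})\Rightarrow(\ref{it:pollo01})$, using throughout two facts from the introduction: being conservative is inherited by subalgebras (each triple of a subalgebra is a triple of $\alg{A}$), and $\me=\me_{\leq_a}$ for every base point $a$, so that medians can be computed through the formula \eqref{eqn:intro} in any of the $\wedge$-semilattice orders $\leq_a$. As already noted before Lemma~\ref{lem:spo}, $\alg{A}_2$ is itself \emph{not} conservative, since $\me(a,c,d)=b\notin\{a,c,d\}$.

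The implication $(\ref{it:pollo01})\Rightarrow(\ref{it:polnewbis})$ is immediate: a subalgebra of a conservative median algebra is conservative, whereas $\alg{A}_2$ is not, so $\alg{A}_2$ cannot be a subalgebra of a conservative $\alg{A}$.

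For $(\ref{it:polnew})\Rightarrow(\ref{it:pollo01})$ I argue by contraposition. Suppose $\alg{A}$ is not conservative and choose $x,y,z$ with $m:=\me(x,y,z)\notin\{x,y,z\}$; these are pairwise distinct, since a repetition would place the median in $\{x,y,z\}$ by idempotency. Working in $\leq_x$, whose bottom element is $x$, formula \eqref{eqn:intro} yields $m=(x\wedge y)\vee(x\wedge z)\vee(y\wedge z)=y\wedge z$, the meet being taken in $\leq_x$. Hence $m\leq_x y$ and $m\leq_x z$, while $x\neq m$ gives $x<_x m$, and $m\neq y,z$ makes these inequalities strict; finally $y\wedge z=m\neq y,z$ shows that $y$ and $z$ are incomparable in $\leq_x$. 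Thus $x<_x m<_x y$ and $x<_x m<_x z$ with $y,z$ incomparable, which is precisely the poset of Fig.\ \ref{fig:bcv}, so $(\ref{it:polnew})$ fails. (Alternatively, $m\leq_x y$ and $m\leq_x z$ can be read directly from \eqref{eqn:prem}.)

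It remains to prove $(\ref{it:polnewbis})\Rightarrow(\ref{it:polnew})$, once more by contraposition, and this is the delicate step. Assume some $\struc{A,\leq_a}$ contains a copy of Fig.\ \ref{fig:bcv}, say $a'<_a b'<_a c'$ and $a'<_a b'<_a d'$ with $c',d'$ incomparable. The obstacle is that an order-embedding does \emph{not} force $b'$ to be the meet of $c'$ and $d'$, so $\{a',b',c',d'\}$ need not be closed under $\me$. The remedy is to tighten the apex: put $e:=c'\wedge d'$, which exists because $\leq_a$ is a $\wedge$-semilattice. Since $b'$ is a lower bound of $c'$ and $d'$ we have $b'\leq_a e$, hence $a'<_a e$; and $e<_a c'$, $e<_a d'$ strictly because $c',d'$ are incomparable. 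A short computation with \eqref{eqn:intro} then gives $\me(a',c',d')=e$ together with $\me(e,c',d')=\me(a',e,c')=\me(a',e,d')=e$, so $\{a',e,c',d'\}$ is closed under $\me$ and isomorphic to $\alg{A}_2$ with apex $e$. Thus $\alg{A}$ contains $\alg{A}_2$ as a subalgebra and $(\ref{it:polnewbis})$ fails, closing the cycle. I expect this final tightening --- upgrading a bare poset copy to a genuine median subalgebra, and in particular checking that $e$ stays strictly above $a'$ (which is exactly where the middle element $b'$ of Fig.\ \ref{fig:bcv} is used) --- to be the main obstacle; all remaining steps reduce to the transparent meet/join computations provided by \eqref{eqn:intro}.
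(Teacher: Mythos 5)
Your proof is correct and follows essentially the same route as the paper: a non-conservative triple is analysed in the semilattice order based at one of its entries, where $\me(x,y,z)=y\wedge z$ produces the forbidden poset, and conversely a poset copy is upgraded to a subalgebra copy of $\alg{A}_2$. Your treatment of $(\ref{it:polnewbis})\Rightarrow(\ref{it:polnew})$ via the tightened apex $e=c'\wedge d'$ is in fact more careful than the paper's, which dismisses the equivalence of $(\ref{it:polnewbis})$ and $(\ref{it:polnew})$ as immediate from the definition of $\leq_a$.
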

\begin{proof}
First note that any median semilattice with at most four elements is conservative, with the exception of the poset depicted in Fig.\ \ref{fig:bcv}. Hence, we assume that $|A|\geq 5$.

(\ref{it:polnewbis}) $\iff$ (\ref{it:polnew}): Follows directly from the definition of $\leq_a$.

(\ref{it:pollo01})  $\implies$ (\ref{it:polnew}):  Follows from Lemma  \ref{lem:spo}.

(\ref{it:polnew}) $\implies$(\ref{it:pollo01}):  Suppose that $\alg{A}$ is not conservative, that is, there are $a,b,c,d\in A$ such that $d:=\me(a,b,c)\not\in\{a,b,c\}$. Clearly, $a$, $b$ and $c$ must be pairwise distinct. By \eqref{eqn:prem}, $a$ and $b$ are  $\leq_c$-incomparable, and $d<_c a$ and $d<_c b$. Moreover, $c<_c d$ and thus $\struc{\{a, b, c, d\}, \leq_c}$ is a copy of $\alg{A}_2$ in $\struc{A, \leq_c}$.   
\end{proof}

Let $\alg{C}_0=\struc{C_0, \leq_0, c_0}$ and $\alg{C}_1=\struc{C_1, \leq_1, c_1}$ be chains with bottom elements $c_0$ and $c_1$. The  \emph{$\bot$-coalesced sum $\alg{C}_0\bot \alg{C}_1$} of $\alg{C}_0$  and $\alg{C}_1$ is   the poset obtained by amalgamating $c_0$ and $c_1$ in the disjoint union of $C_0$ and $C_1$. Formally, 
\[\alg{C}_0 \bot \alg{C}_1=\struc{C_0\sqcup C_{1}\ /\! \equiv, \ \leq},\]
where $\sqcup$ is the disjoint union, where $\equiv$ is the equivalence generated by $\{(c_0, c_1)\}$ and where $\leq$ is defined by
\[
x/\!\equiv \ \leq \ y/ \!\equiv \ \iff \ (x \in  \{c_0, c_1\} \mbox{ or } x\leq_0 y \mbox{ or } x\leq_1 y).
\]

Theorem \ref{thm:phj} below provides descriptions of conservative median algebras and semilattices  by means of representations by chains. Its proof requires the next technical result. 
%\todo[inline]{In the nextproofs, $x$ has been substituted by $b$, $y$ by $c$, $z$ by $d$ and $z'$ by $d$'. Check with the previous version of the proof if no mistake has been introduced.}

\begin{lemma}\label{lem:dbkf}
For every median algebra $\alg{A}$ with $|A|\geq 5$, the following conditions are equivalent.
\begin{enumerate}
\item\label{it:pol01} $\alg{A}$ is conservative
\item\label{it:pol02} There is an $a \in A$ and lower bounded chains $\alg{C}_0$ and $ \alg{C}_1$  such that $\struc{A, \leq_a}$ is isomorphic to $\alg{C}_0 \bot \alg{C}_1$.
\item\label{it:pol03} For every $a \in A$, there are lower bounded chains $\alg{C}_0$ and $ \alg{C}_1$  such that $\struc{A, \leq_a}$ is isomorphic to $\alg{C}_0 \bot \alg{C}_1$.
\end{enumerate} 
\end{lemma}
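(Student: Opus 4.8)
The plan is to prove the cycle (\ref{it:pol01}) $\Rightarrow$ (\ref{it:pol03}) $\Rightarrow$ (\ref{it:pol02}) $\Rightarrow$ (\ref{it:pol01}). The middle implication is immediate since $A\neq\emptyset$. For (\ref{it:pol02}) $\Rightarrow$ (\ref{it:pol01}) I would work directly in $\struc{A,\leq_a}\cong\alg{C}_0\bot\alg{C}_1$ and evaluate the median operation \eqref{eqn:intro}: three elements lying in a single chain return their middle element, while in every other case at least two of the three pairwise meets collapse to the bottom $a$, so that \eqref{eqn:intro} reduces to the meet of the two elements sharing a branch (or to $a$), which again lies in $\{x,y,z\}$. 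Thus $\me_{\leq_a}$ is conservative, and since $\me=\me_{\leq_a}$ the algebra $\alg{A}$ is conservative.

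The core of the lemma is (\ref{it:pol01}) $\Rightarrow$ (\ref{it:pol03}). Fix $a\in A$ and read off two consequences of conservativity in the median semilattice $\struc{A,\leq_a}$, where $x\wedge y=\me(a,x,y)$. First, a \emph{dichotomy}: $\me(a,x,y)\in\{a,x,y\}$ forces any two elements to be either comparable or to satisfy $x\wedge y=a$. Second, there is \emph{no three-element antichain}: pairwise incomparable $x,y,z$ would be pairwise disjoint, so $\me(x,y,z)=(x\wedge y)\vee(x\wedge z)\vee(y\wedge z)=a\notin\{x,y,z\}$, contradicting \eqref{eqn:cons}.

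The decisive step — and the one I expect to be the main obstacle, since it is exactly where the hypothesis $|A|\geq 5$ is needed (the four-element Boolean algebra being a conservative counterexample) — is to show that two incomparable elements never share an upper bound. Assume $x,z$ are incomparable with a common upper bound. By the dichotomy $x\wedge z=a$, and as principal ideals are distributive lattices (Theorem \ref{thm:semi}) the join $s=x\vee z$ exists, so $\{a,x,z,s\}$ is a copy of the four-element Boolean algebra. I would then prove that this diamond is all of $A$: any further element $t$ is located by inspecting $\me(a,t,s)\in\{a,t,s\}$. If $\me(a,t,s)=s$ then $s\leq_a t$ and $\me(x,z,t)=x\vee z=s\notin\{x,z,t\}$; if $\me(a,t,s)=a$ then $t$ is disjoint from both $x$ and $z$, producing the forbidden three-element antichain $\{t,x,z\}$; and if $\me(a,t,s)=t$ then $t\leq_a s$ sits in the distributive lattice $\mathord{\downarrow}s$, where being incomparable to both $x,z$ again gives a three-element antichain and being comparable to exactly one of them yields a copy of $N_5$ inside $\mathord{\downarrow}s$, contradicting Theorem \ref{thm:semi}. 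Hence $t\in\{a,x,z,s\}$, forcing $|A|=4$, a contradiction.

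With this in hand the decomposition is routine. Comparability is an equivalence relation on $A\setminus\{a\}$: it is reflexive and symmetric, and transitive because two elements sharing an upper bound are comparable (the decisive step) while two elements above a common $y\neq a$ satisfy $x\wedge z\geq y\neq a$, hence are comparable by the dichotomy (the remaining, nested cases following from transitivity of $\leq_a$). Each class is a chain, and there are at most two classes, for three classes would yield a three-element antichain. Adjoining $a$ as a bottom to each class produces lower bounded chains $\alg{C}_0$ and $\alg{C}_1$ (with $\alg{C}_1=\struc{\{a\}}$ if there is a single class), and the construction of $\leq_a$ shows that $\struc{A,\leq_a}$ is isomorphic to $\alg{C}_0\bot\alg{C}_1$, which is (\ref{it:pol03}).
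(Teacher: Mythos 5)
Your overall route is sound and genuinely different in organization from the paper's: you derive three order-theoretic consequences of conservativity (the dichotomy, the absence of $3$-antichains, and the non-existence of a common upper bound for incomparable elements) and then split $A\setminus\{a\}$ into at most two comparability classes, whereas the paper fixes a disjoint pair $b,c$ and builds the two chains explicitly as $\{d : \me(b,d,a)\neq a\}$ and $\{d : \me(c,d,a)\neq a\}$, proving the ``at least one branch'' half by a direct computation with identity \eqref{eqn:the}. Both arguments concentrate the use of $|A|\geq 5$ in the same place, namely ruling out the diamond plus a fifth element.

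There is, however, one step whose justification as written does not go through: in the case $\me(a,t,s)=t$ with $t$ comparable to exactly one of $x,z$ (say $a<t<x$), you claim a ``copy of $N_5$ inside $\mathord{\downarrow}s$, contradicting Theorem \ref{thm:semi}.'' Theorem \ref{thm:semi} only forbids $N_5$ as a \emph{sublattice} of a principal ideal; a subposet copy of the pentagon is perfectly compatible with distributivity (the lattice $\alg{2}\times\alg{3}$ contains one). In your configuration the meets behave ($t\wedge z\leq x\wedge z=a$) and $x\vee z=s$, but nothing yet guarantees $t\vee z=s$, which is what the $N_5$ argument needs. The gap is easy to close: by conservativity, $\me(t,z,s)=(t\wedge z)\vee(t\wedge s)\vee(z\wedge s)=t\vee z$ must lie in $\{t,z,s\}$, and since $t$ and $z$ are incomparable it cannot be $t$ or $z$, so $t\vee z=s$ and $\{a,t,x,z,s\}$ really is a pentagon sublattice of $\mathord{\downarrow}s$. (Alternatively, since every subset of a conservative median algebra is a subalgebra, $\{a,t,x,z,s\}$ is itself a median algebra whose $\leq_a$-semilattice is the pentagon, which Theorem \ref{thm:semi} excludes; this is essentially how the paper handles the corresponding configuration.) You should also dispose, in one line, of the case where $t$ is comparable to both $x$ and $z$, which forces $t\in\{a,s\}$. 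With these repairs the proof is complete.
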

\begin{proof}
(\ref{it:pol01}) $\implies$ (\ref{it:pol03}): Let $a \in A$.  First, suppose that for every $b, c \in A\setminus\{a\}$ we have $\me(b,c,a)\not=a$. Since $\alg{A}$ is conservative, for every $b,c\in A$, either $b\leq_a c$ or $c\leq_a b$. Thus $\leq_a$ is a chain with bottom element $a$, and we can choose $\alg{C}_1=\struc{A, \leq_a, a}$ and $\alg{C}_2=\struc{\{a\}, \leq_a, a}$.  

Suppose now that there are $b,c \in A\setminus\{a\}$ such that $\me(b,c, a)=a$, that is,  $b \wedge c=a$. We show that for every $a\in A$,
\begin{equation}\label{eqn:vyt}
d\neq a \implies \big(\me(b,d,a)\neq a \quad \mbox{ or } \quad \me(c,d,a)\neq a\big). %, \quad d \in A.
\end{equation}
%for any $z\neq a$ either $(x,z,a)\neq a$ or $(y,z,a)\neq a$, which exactly means that $x\leq_a z$, $z\leq_a x$, $y\leq_a z$ or $z\leq_a y$. 
For the sake of a contradiction, suppose that  $\me(b,d,a)=a$ and $\me(c,d,a)= a$ for some $d\neq a$. By equation (\ref{eqn:the}), we have
\begin{equation}\label{eqn:the01}
\me(b,c,d)=\me\big(\me\big(\me(b,c,d),b,a\big),\me\big(\me(b,c,d),d,a\big),\me\big(\me(b,c,d),c,a\big) \big).
\end{equation}
Assume that $\me(b,c,d)=b$. Then  (\ref{eqn:the01}) is equivalent to 
$$b=\me(b,\me(b,d,a),\me(b,c,a))=a,$$  
which yields the desired contradiction. By symmetry, we derive the same contradiction in the case $\me(b,c,d)\in \{c,d\}$.

We now prove that for every $a\in A$,
\begin{equation}\label{eqn:vyt02}
d\neq a \implies \big( \me(b,d,a)=a \quad \mbox{ or } \quad  \me(c,d,a)= a\big).%, \quad d \in A.
\end{equation}
For the sake of a contradiction, suppose  that $\me(b,d,a)\neq a$ and $\me(c,d,a)\neq a$ for some $d\neq a$. Since $\me(b,c,a)=a$ we have that $d\not\in \{b,c\}$. 

If $\me(b,d,a)=d$ and $\me(c,d,a)=c$, then $c\leq_a d \leq_a b$ which contradicts $b\wedge c=a$.  Similarly, if $\me(b,d,a)=d$ and $\me(c,d,a)=d$, then $d\leq_a b$ and $d \leq_a c$ which also contradicts  $b \wedge c=a$. The case $\me(b,d,a)=b$ and $\me(c,d,a)=d$ leads to a similar contradiction. 

Hence $\me(b,d,a)=b$ and $\me(c,d,a)=c$, and the $\leq_a$-median semilattice arising from the subalgebra $\alg{B}=\{a,b,c,d\}$ of $\alg{A}$ is  the median semilattice associated with the four element Boolean algebra.
Let $d'\in A \setminus\{a,b,c,d\}$. By (\ref{eqn:vyt}) and symmetry we may assume that $\me(b,d',a)\in \{b,d'\}$.  First, suppose that $\me(b,d',a)=d'$. Then  $\struc{\{a, b, c, d, d'\}, \leq_a}$ is $N_5$ (Fig.\ \ref{fig:lat02_00}) which is not a median semilattice.  
Suppose then  that $\me(b,d',a)=b$. 
In this case, the restriction of $\leq_a$ to $\{a, b, c, d, d'\}$ is depicted in Fig.\ \ref{fig:lat04_00} or \ref{fig:lat03_01}, which contradicts Proposition  \ref{lem:spo}, and the proof of (\ref{eqn:vyt02}) is thus complete.

Now, let ${C}_0=\{d \in A \mid (b,d,a)\neq a\}$, ${C}_1=\{d \in A \mid (c,d,a)\neq a\}$ and let $\alg{C}_0=\struc{C_0, \leq_a, a}$ and $\alg{C}_1=\struc{C_1, \leq_a, a}$. It follows from (\ref{eqn:vyt}) and (\ref{eqn:vyt02}) that $\struc{\alg{A}, \leq_a}$ is isomorphic to $\alg{C}_0 \bot \alg{C}_1$.

(\ref{it:pol03}) $\implies$ (\ref{it:pol02}): Trivial.

(\ref{it:pol02}) $\implies$ (\ref{it:pol01}): Let $b,c,d\in \alg{C}_0 \bot \alg{C}_1$. If $b,c,d \in C_i$ for some $i \in \{0,1\}$ then $\me(b,c,d)\in \{b,c,d\}$. Otherwise, if  $b, c \in C_i$ and $d \not\in C_{i}$, then $\me(b,c,d)\in \{b,c\}$.  
\end{proof}

\begin{theorem}\label{thm:phj}
Let $\alg{A}=\struc{A, \me}$ be a median algebra with $|A|\geq 5$. Then $\alg{A}$ is conservative if and only if there is a total order $\leq$ on $A$ such that  $\me=\me_\leq$.

Consequently, if $\alg{A}$ is a conservative median algebra whose  operation is not the median operation of a totally ordered set, then $\alg{A}$ is isomorphic to $\alg{2}\times \alg{2}$.
\end{theorem}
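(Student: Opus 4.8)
The plan is to reduce everything to Lemma \ref{lem:dbkf}, which (for $|A|\geq 5$) already identifies conservativity with the statement that each semilattice reduct $\struc{A,\leq_a}$ is a $\bot$-coalesced sum $\alg{C}_0\bot\alg{C}_1$ of two lower bounded chains; the identity $\me=\me_{\leq_a}$ recalled in \S 1 will serve as the bridge between $\me$ and the order data. The direction ``$\me=\me_\leq$ for a total order $\leq$ $\Rightarrow$ $\alg{A}$ conservative'' is immediate, since the median of three elements of a chain is their middle element and so lies in $\{x,y,z\}$. For the converse I would fix $a\in A$, invoke Lemma \ref{lem:dbkf} to get $\struc{A,\leq_a}\cong\alg{C}_0\bot\alg{C}_1$, and then \emph{straighten} this coalesced sum into a total order $\leq$ by reversing $\alg{C}_0$, stacking its non-bottom elements below the shared bottom $a$, and leaving $\alg{C}_1$ above $a$; this plainly defines a total order on $A$.

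It then remains to check $\me=\me_\leq$, and since $\me=\me_{\leq_a}$ it suffices to verify $\me_{\leq_a}=\me_\leq$ pointwise. I would split a triple $x,y,z$ into cases according to its distribution between the two chains. If all three lie in a common $C_i$, then $\leq$ and $\leq_a$ agree on $C_1$ and are mutually reverse on $C_0$, and since reversing a chain does not move the middle of three elements, the two medians coincide. If two of the three lie in one chain and the third in the other, then the cross-chain meets collapse to $a$, so $\me_{\leq_a}$ returns the $\leq_a$-smaller of the two cochain elements; one checks that this same element is the $\leq$-middle of the triple, so again the medians agree. This case analysis is the main technical step; I expect its only delicate point to be the bookkeeping of how the reversal of $\alg{C}_0$ interacts with the position of $a$.

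For the ``Consequently'' clause I would argue by $|A|$. When $|A|\geq 5$ the biconditional just proved yields $\me=\me_\leq$ for a total order, so a conservative algebra whose operation is \emph{not} a chain median must satisfy $|A|\leq 4$. The cases $|A|\leq 3$ are trivial, since every such median algebra is a chain or becomes one once the relevant bottom is placed in the middle, and so is a chain median. For $|A|=4$ I would classify the conservative median semilattices $\struc{A,\leq_a}$ directly: rerunning the analysis of Lemma \ref{lem:dbkf}, but now without a fifth element to force the forbidden configurations of Lemma \ref{lem:spo}, leaves only a chain, a nontrivial $\bot$-coalesced sum, or the median semilattice of the four element Boolean algebra, the remaining four-element semilattices ($\alg{A}_2$ and the three-atom claw) failing conservativity. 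The first two are chain medians by the straightening above, so if $\alg{A}$ is not a chain median then $\struc{A,\leq_a}$ is the Boolean one and $\alg{A}\cong\alg{2}\times\alg{2}$. Here the main obstacle is making the four-element enumeration airtight and confirming that $\alg{2}\times\alg{2}$ really is not a chain median, which holds because all four of its elements occur as medians of three-element subsets whereas in a four-element chain only the two middle elements can.
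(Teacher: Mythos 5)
Your proposal is correct and follows essentially the same route as the paper: the easy direction is noted, the hard direction invokes Lemma~\ref{lem:dbkf} to get the coalesced-sum decomposition and straightens it into a total order by reversing one of the two chains, and the ``consequently'' clause is handled by inspecting median algebras with at most four elements. The only difference is that you spell out the case analysis verifying $\me_{\leq_a}=\me_\leq$ and the four-element classification, which the paper dismisses with ``clearly'' and a one-line remark; your added details are accurate.
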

\begin{proof}
We have already noted that if $\leq$ is a total order on $A$ then $\struc{A,\me_\leq}$ is conservative. Now assume that $\alg{A}=\struc{A, \me}$ is a conservative median algebra with $|A|\geq 5$. Consider the universe of $\alg{C}_0\bot\alg{C}_1$  in condition (\ref{it:pol02}) of Lemma \ref{lem:dbkf} endowed with $\leq$ defined by $x\leq y$ if $x\in C_1$ and $y \in C_0$ or $x, y \in C_0$ and $x\leq_0 y$ or $x,y \in C_1$ and $y\leq_1 x$. Clearly, $\leq$ is a  total order and $\me_\leq=\me$.

For the second part of the proof, note that the only conservative median algebra with at most four elements whose median operation is not the median operation of a totally ordered set is $\alg{2}\times \alg{2}$.
\end{proof}
\section{Duality theory toolbox}\label{sec:prel}

In this section, we recall a  dual equivalence between the category of median algebras and a category of structured topological spaces. It was first exposed in \cite{Isbell1980} and was stated in terms of homomorphisms into the median algebra $\alg{2}$.  It was later on recognized \cite{Werner1981,Davey1983} as being an instance of a general scheme of dualities for finitely generated quasi-varieties of algebras known as \emph{natural duality} \cite{NDFWA}.  This general approach, as well as its application to the variety of median algebras, is fully exposed in \cite[Section 4.3]{NDFWA}.  
%The sets $u^{-1}(0)$ where $u\colon \alg{A}\to \alg{2}$ is a median homomorphism can be axiomatized as follows.

%Since a median homomorphism $u\colon \alg{A}\to \alg{2}$ is completely characterized by $u^{-1}(0)$, this dual equivalence can also be stated in terms of prime convex subsets.  It is this language that we use to recall the duality.

\begin {definition}[\cite{Bandelt1983}]
Let $\alg{A}=\struc{A, \me}$ be a median algebra. A subset $C$ of $A$ is \emph{convex} if $\me(c_1,c_2,a)\in C$ whenever $c_1, c_2 \in C$ and $a \in A$. A convex subset $C$ of $A$ is \emph{prime} if its complement $A \setminus C$  in $A$ is also convex. We denote by $\spec{ \alg{A}}$ the set of prime convex subsets of the median algebra $\alg{A}$. 
\end {definition}
Equivalently, $C\subseteq A$ is  a prime convex  subset if it satisfies the following condition: for every $x, y, z \in A$, the element $\me(x,y,z)$ belongs to $C$ if and only if at least one of the sets $\{x,y\}, \{x,z\}, \{y,z\}$ is a subset of $C$. 

\begin {proposition}[Proposition 1.3 in \cite{Bandelt1983}]\label{lem:fgt}
If\/ $\alg{L}$ is a bounded distributive lattice, then the prime convex subsets of\/ $\alg{L}$ are its prime filters and prime ideals.
\end {proposition}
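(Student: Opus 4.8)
The plan is to prove the two inclusions separately, working throughout with the explicit formula $\me(x,y,z)=(x\wedge y)\vee(x\wedge z)\vee(y\wedge z)$, which on the distributive lattice $\alg{L}$ coincides with $\me_\leq$. Distributivity guarantees that this is genuinely a median operation, while boundedness supplies the bottom and top elements $0,1$ that will drive the converse direction.

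For the inclusion from right to left, I would first check that every filter of $\alg{L}$ is convex: if $c_1,c_2$ lie in a filter $F$ and $a\in A$, then $\me(c_1,c_2,a)\geq c_1\wedge c_2\in F$, so $\me(c_1,c_2,a)\in F$ since $F$ is an up-set. Dually, every ideal is convex, because $\me(c_1,c_2,a)\leq c_1\vee c_2$. Now if $F$ is a prime filter then its complement $L\setminus F$ is a prime ideal, so both $F$ and $L\setminus F$ are convex and $F\in\spec{\alg{L}}$; symmetrically every prime ideal lies in $\spec{\alg{L}}$.

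For the converse, let $C\in\spec{\alg{L}}$. The first step is to observe that $C$ is order-convex: if $x\leq y\leq z$ with $x,z\in C$, then $\me(x,z,y)=y$, whence $y\in C$ by convexity; the same holds for $L\setminus C$, which is again prime convex. The decisive step is then a dichotomy governed by the bounds of $\alg{L}$. If $0\in C$ and $1\notin C$, I claim $C$ is a prime ideal: it is a down-set by order-convexity (every $y\leq x\in C$ lies between $0$ and $x$); it is closed under joins because $\me(x,y,1)=x\vee y$; and it is prime because its complement is a filter. Indeed $1\in L\setminus C$, the set $L\setminus C$ is an up-set by order-convexity, and $\me(x,y,0)=x\wedge y$ shows it is closed under meets, so two elements outside $C$ cannot have their meet inside $C$. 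Symmetrically, if $1\in C$ and $0\notin C$, then $C$ is a prime filter.

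The main obstacle is not any single computation but pinning down the degenerate cases, which is where the statement becomes sensitive to conventions. If both $0$ and $1$ belong to $C$, order-convexity forces $C=L$, and if neither does, then $0,1\in L\setminus C$ forces $C=\emptyset$; hence the only prime convex sets outside the filter/ideal dichotomy are $\emptyset$ and $L$. Thus the argument really shows that the proper, nonempty members of $\spec{\alg{L}}$ are exactly the prime filters and prime ideals, and to match the statement verbatim one adopts the convention that $\emptyset$ and $L$ are treated as the improper prime ideal and prime filter (or are simply excluded).
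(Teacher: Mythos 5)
Your argument is correct, and it is worth noting that the paper itself offers no proof of this proposition --- it is imported verbatim as Proposition~1.3 of Bandelt--Hedl\'{\i}kov\'{a} --- so your write-up is genuinely self-contained where the paper simply cites. The two inclusions are handled by exactly the computations one would want: $\me(c_1,c_2,a)\geq c_1\wedge c_2$ and $\me(c_1,c_2,a)\leq c_1\vee c_2$ give convexity of filters and ideals, and the identities $\me(x,y,1)=x\vee y$, $\me(x,y,0)=x\wedge y$, $\me(x,z,y)=y$ for $x\leq y\leq z$ extract the lattice-theoretic structure of a prime convex set from the bounds; the case split on where $0$ and $1$ land is the right organizing principle and each subcase checks out. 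Your closing caveat about the degenerate cases is not mere pedantry but an actual discrepancy with the literal statement: under the paper's definition, $\varnothing$ and $L$ \emph{are} prime convex (both conditions hold vacuously or trivially), and indeed the paper relies on this, since $\varnothing$ and $A$ appear as the distinguished bottom and top elements of the dual structure $\alg{A}^*=\struc{\spec{\alg{A}},\subseteq,\cdot^c,\varnothing,A,\tau}$. So the precise statement your proof establishes --- that the \emph{proper nonempty} members of $\spec{\alg{L}}$ are exactly the prime filters and prime ideals --- is the one actually consistent with how $\spec{\cdot}$ is used later in the paper, and flagging the convention is the correct resolution rather than a gap.
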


It is not difficult to check that in a median algebra $\alg{A}$, prime convex subsets coincide with the sets $u^{-1}(0)$ where $u\colon \alg{A}\to \alg{2}$ is a median homomorphism. It is convenient to use  prime convex subsets instead of homomorphisms $u\colon \alg{A}\to \alg{2}$ in the dual equivalence we use in this paper.
As noted in \cite{Isbell1980,Werner1981}, the set $\spec{\alg{A}}$  can be equipped with a topological structure that completely characterizes  $\alg{A}$. We recall this construction in the remainder of this section. For $a \in \alg{A}$ we denote by $r_a$ the set $\{I \in \spec{\alg{A}}\mid a \not\in I\}$.

\begin {definition}
Let $\alg{A}$ be a median agebra.  The \emph{dual $\alg{A}^*$ of $\alg{A}$} is the topological structure $\alg{A}^*=\struc{\spec{\alg{A}}, \subseteq, \cdot^c, \varnothing, A, \tau}$ where  $\cdot^c$ is the set-complement  in A and $\tau$ is the topology with subbasis  $\{r_a \mid a \in A\} \cup \{\spec{\alg{A}}\setminus r_a \mid a \in A\}$. 

Furthermore, for a homomorphism $f:\alg{A} \to \alg{B}$ between median algebras, let $f^*$ the map defined on $\alg{B}^*$ by $f^*(I)=f^{-1}(I)$. It is not difficult to check using the definitions that $f^*$ is valued in $\alg{A}^*$
\end {definition}

\begin {remark}By the \emph{Prime Convex Theorem} \cite[Theorem 13]{Nieminen1978}, it follows that $\{r_a \mid a \in A\} \cup \{\spec{A}\setminus r_a \mid a \in A\}$ is in fact a basis. 
%Equivalently, this result can be viewed as a consequence of the \emph{Separation Theorem}  \cite[Chapter 1, Theorem 4.4]{NDFWA} since $\var{M}$ coincides with $\classop{ISP}(\alg{2})$ where $\alg{2}$ is the two element median algebra (in which the median operation is the majority term). 
%In particular, to check that an equation holds in every median algebra, it suffices to prove that it holds in $\alg{2}$. For example, the equation
%\begin{equation}\label{eqn:the}
%\me(x,y,z)=\me\big(\me\big(\me(x,y,z),x,t\big),\me\big(\me(x,y,z),z,t\big),\me\big(\me(x,y,z),y,t\big) \big)
%\end{equation}
%is satisfied in $ \var{M}$.
\end {remark}

The class of duals of median algebras can be defined as follows.

\begin {definition}[\cite{NDFWA}]
A \emph{bounded strongly complemented \textsc{Priestley} space} is a topological structure
$\topo{X}=\struc{X, \leq, \cdot^c, 0, 1, \tau}$ where $\struc{X,\leq,\tau}$ is a \textsc{Priestley} space  with 0 and 1 as bottom and top elements, and $\cdot^c$ is an order reversing homeomorphism  that satisfies 
\[x\leq x^c \implies x=0 \quad \text{and} \quad x^{cc}=x.\] 
\end {definition}

Bounded strongly complemented \textsc{Priestley} spaces are called \emph{bounded totally ordered disconnected compact spaces with an involution} in \cite{Werner1981}.

\begin {definition}
A \emph{complete ideal $W$}  of a bounded strongly complemented \textsc{Priestley} space  $\topo{X}$ is a clopen downset that satisfies   $x \in W$ if and only if $x^c \not\in W$.
% In particular, a complete ideal is never empty. 
With no danger of ambiguity, we also denote the set of complete ideals of $\topo{X}$ by $\spec{\topo{X}}$. This set is turned into the algebra $\topo{X}_*=\struc{\spec{\topo{X}}, \me}$ where $\me$ is the restriction of  $\me_{2^{X}}$ to $\spec{\topo{X}}$. 
For a continuous structure-preserving  map $\phi\colon \topo{X} \to \topo{Y}$, we define $\phi_*$  to be the map on $\topo{Y}_*$ given by $\phi_*(W)=\phi^{-1}(W)$.
\end {definition}

The class  $\class{X}$ of bounded strongly complemented \textsc{Priestley} spaces can be thought of as a category with continuous structure-preserving maps as arrows. Likewise, the variety $\var{M}$ of median algebras is thought of as a category with homomorphisms as arrows.    For $\topo{X}, \topo{Y}\in \class{X}$, we say that $\topo{Y}$ is a \emph{substructure} of $\topo{X}$ if $Y$ is a closed subset of $\struc{X,\tau}$ and $\topo{Y}$ is induced by the restriction of $\topo{X}$ to $Y$. In that case, if $\psi\colon \topo{Z}\to \topo{Y}$ is an isomorphism, we say that $\psi$ is an embedding of $\topo{Z}$ into $\topo{X}$.

%For any $\alg{A}\in \var{M}$ and any $\topo{X}\in \class{X}$, we define the maps $r_{\alg{A}}:\alg{A} \to 2^{\alg{A}^*}$ and $\rho_{\topo{X}}:\topo{X} \to 2^{\topo{X}_*}$ by
%\[
%r_{\alg{A}}(a)=\{I \in \alg{A}^* \mid a \not\in I\},  
%\] 
%\[
%\rho_{\topo{X}}(I)=\{x \in \topo{X}_* \mid I \not\in x\}.
%\]
%The maps $r_{\alg{A}}$ and $\rho_{\topo{X}}$ are embeddings and $(\cdot^*, \cdot_*, r, \rho)$ is a dual adjunction between $\var{M}$ and $\class{X}$. Actually, this connection can be turned into a dual equivalence.

\begin {proposition}[\cite{NDFWA,Isbell1980,Werner1981}] \label{prop:fsg}
The functors $\cdot^*:\var{M} \to \class{X}$ and $\cdot_*:\class{X} \to \var{M}$ define a dual equivalence between the categories $\var{M}$ and $\class{X}$.
%Moreover, they turn embeddings into onto morphisms, and onto morphisms into embeddings. Hence, if $\alg{A}, \alg{B}\in \var{M}$ then $(\alg{A}\times \alg{B})^*\cong \alg{A}^* \oplus\alg{B}^*$.
\end {proposition}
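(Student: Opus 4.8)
The plan is to recognize this statement as an instance of the general theory of natural dualities \cite{NDFWA} applied to the finitely generated variety $\var{M}=\mathbb{ISP}(\alg{2})$, and then to check that the concrete functors $\cdot^*$ and $\cdot_*$ defined above coincide with the hom-functors of that scheme. First I would fix the alter ego: the two-element median algebra $\alg{2}$ carries the topological relational structure $\underline{\alg{2}}=\struc{\{0,1\}, \leq, \cdot^c, 0, 1, \tau_{\mathrm{disc}}}$, where $\leq$ is the natural order, $\cdot^c$ swaps $0$ and $1$, and $\tau_{\mathrm{disc}}$ is discrete. One checks that each ingredient is \emph{compatible} with $\alg{2}$, i.e.\ that $\leq$ is a subalgebra of $\alg{2}^2$, that $\cdot^c$ is an endomorphism of $\alg{2}$, and that $0,1$ are term functions. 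This guarantees that $\cdot^*$ and $\cdot_*$ form a dual adjunction between $\var{M}$ and the topological quasi-variety generated by $\underline{\alg{2}}$, the evaluation map $e_{\alg{A}}\colon \alg{A}\to (\alg{A}^*)_*$, $a\mapsto\{I\in\spec{\alg{A}} : a\notin I\}=r_a$, being the unit, where I use the identification of prime convex subsets with kernels $u^{-1}(0)$ of homomorphisms $u\colon\alg{A}\to\alg{2}$ recorded above.

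The decisive structural fact is that $\me$ is a majority operation on $\alg{2}$, hence a near-unanimity term of arity $3$. I would then invoke the Near-Unanimity Duality Theorem: since $\alg{2}$ carries a ternary near-unanimity term, an alter ego whose relational structure entails all subalgebras of $\alg{2}^2$ yields a duality on $\var{M}$. The only nontrivial binary subalgebras of $\alg{2}^2$ are $\leq$, its converse $\geq=\{(x,y) : (x^c,y^c)\in\leq\}$, and the graph of $\cdot^c$, all of which are term-definable from $\leq$ and $\cdot^c$; thus $\underline{\alg{2}}$ entails every compatible binary relation and the theorem applies. This shows that $e_{\alg{A}}$ is an isomorphism for every median algebra $\alg{A}$, so $\underline{\alg{2}}$ dualizes $\alg{2}$.

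To upgrade the duality to a dual equivalence I would establish fullness, namely that the counit $\epsilon_{\topo{X}}\colon\topo{X}\to(\topo{X}_*)^*$ is an isomorphism for every $\topo{X}\in\class{X}$. This splits into two parts. First, that $\underline{\alg{2}}$ yields a \emph{strong} (hence full) duality, which follows from the near-unanimity machinery once one observes that $\underline{\alg{2}}$ is injective in the dual category. Second, the explicit identification of that dual category: I would verify that the topological quasi-variety $\mathbb{IS}_c\mathbb{P}^+(\underline{\alg{2}})$ generated by $\underline{\alg{2}}$ is precisely the class $\class{X}$ of bounded strongly complemented \textsc{Priestley} spaces, by showing that the \textsc{Priestley} separation axiom together with the involution laws $x\leq x^c\implies x=0$ and $x^{cc}=x$ exactly capture membership, and that the complete ideals of such a space are recovered as the continuous structure-preserving maps $\topo{X}\to\underline{\alg{2}}$. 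Once strongness is in place, the dual equivalence follows formally, the two functors being mutually quasi-inverse.

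The main obstacle I anticipate is this \emph{characterization of the dual category}: proving that every bounded strongly complemented \textsc{Priestley} space genuinely arises as $\alg{A}^*$ for some median algebra, rather than merely belonging to some larger closure, which amounts to showing that the chosen first-order axioms are full in the technical sense of \cite[Section 4.3]{NDFWA} so that no further topological or relational conditions are needed. Once fullness is secured, the remaining verifications — continuity and structure preservation of $f^*$ and $\phi_*$ — are routine and were already indicated when the functors were defined.
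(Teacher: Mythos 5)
The paper offers no proof of this proposition at all---it simply cites \cite{NDFWA,Isbell1980,Werner1981}---and your outline is precisely the standard natural-duality argument contained in those sources (alter ego, NU Duality Theorem, strong duality, identification of the dual category), so this is essentially the same approach and is correct. Two small quibbles that do not affect the argument: the nullary operations $0,1$ are compatible because $\{0\}$ and $\{1\}$ are one-element subalgebras of $\alg{2}$, not because they are term functions (a median algebra has no constant terms), and $\alg{2}^2$ has more subalgebras than the three you list (every subset of $\{0,1\}^2$ is closed under the componentwise majority operation), although all of them are indeed entailed by your chosen alter ego, so the Near-Unanimity Duality Theorem still applies.
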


\begin {remark}
The isomorphism between $\alg{A}$ and $(\alg{A}^*)_*$ mentioned in Proposition \ref{prop:fsg} is given by $a\mapsto r_a$.
\end {remark}

We denote by  $\topo{X} \oplus \topo{Y}$ the coproduct of $\topo{X},\topo{Y}\in \class{X}$. It is not difficult to check that $\topo{X} \oplus \topo{Y}$ is realized in $\class{X}$ by amalgamating $0$ and $1$ of $\topo{X}$ with $0$ and $1$ of $\topo{Y}$, respectively,  in the disjoint union of $\topo{X}$ and $\topo{Y}$.

It is a general result of category theory that under a dual equivalence, products in one category correspond to coproducts in the other category  (for instance, see \cite[Chapter 1, Lemma 1.4]{NDFWA}). In particular, we have
\[
(\alg{A}\times \alg{B})^* \cong \alg{A}^* \oplus \alg{B}^*,
\]
for every $\alg{A}, \alg{B} \in \var{M}$. Moreover, we have the following useful result.
\begin {proposition}\label{prop:inj}
A homomorphism $f\colon \alg{A}\to \alg{B}$ between two median algebras $\alg{A}$ and $\alg{B}$ is onto if and only if $f^*\colon \alg{B}^* \to \alg{A}^*$ is an embedding.
\end {proposition}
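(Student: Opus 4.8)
The plan is to prove the two implications separately, using Proposition~\ref{prop:fsg} to move between the two categories. For the forward direction, assume $f$ is onto. First I would observe that $f^*$ is injective: if $f^{-1}(I)=f^{-1}(J)$ for $I,J\in\spec{\alg{B}}$, then applying $f$ and using surjectivity (so that $f(f^{-1}(S))=S$ for every $S\subseteq B$) gives $I=J$. The map $f^*$ is a morphism of $\class{X}$, hence continuous, and it goes from the compact space $\alg{B}^*$ into the Hausdorff space $\alg{A}^*$ (both being \textsc{Priestley} spaces). A continuous injection from a compact space into a Hausdorff space is a homeomorphism onto its image and the image $f^*(\alg{B}^*)$ is closed; since $f^*$ moreover preserves $\leq$, $\cdot^c$, $0$ and $1$, it is an isomorphism onto the substructure it induces on $f^*(\alg{B}^*)$. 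By the definition of embedding, $f^*$ is then an embedding.

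For the converse I would argue through the double dual. Because $\cdot^*$ and $\cdot_*$ form a dual equivalence with $\alg{A}\cong(\alg{A}^*)_*$ and $\alg{B}\cong(\alg{B}^*)_*$ naturally (via $a\mapsto r_a$), the map $f$ is onto if and only if $(f^*)_*$ is onto. Hence it suffices to prove the following statement in $\class{X}$: if $\phi\colon\topo{X}\to\topo{Y}$ is an embedding, then $\phi_*\colon\topo{Y}_*\to\topo{X}_*$ is onto. Identifying $\topo{X}$ with its image, I may assume $\topo{X}$ is a closed substructure of $\topo{Y}$ and $\phi$ the inclusion, so that $\phi_*(W)=W\cap X$. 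The goal then becomes: every complete ideal $V$ of $\topo{X}$ is of the form $W\cap X$ for some complete ideal $W$ of $\topo{Y}$.

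The hard part will be this extension step. Writing $W^{c}=\{x^{c}\mid x\in W\}$, recall that a clopen downset $W$ is a complete ideal exactly when $W^{c}=Y\setminus W$. First I would extend $V$ to a clopen downset of $\topo{Y}$: the sets $\mathop{\downarrow}_Y V$ and $\mathop{\uparrow}_Y(X\setminus V)$ are closed (the order of a \textsc{Priestley} space is a closed relation and $V$, $X\setminus V$ are compact) and disjoint (if $x\leq_Y y\leq_Y v$ with $x\in X\setminus V$ and $v\in V$, then $x\leq_X v$, and $V$ being a downset of $X$ forces $x\in V$, a contradiction); by \textsc{Priestley} separation there is a clopen downset $D$ with $\mathop{\downarrow}_Y V\subseteq D\subseteq Y\setminus\mathop{\uparrow}_Y(X\setminus V)$, whence $D\cap X=V$. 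It then remains to repair $D$ into a complete ideal without altering its trace on $X$. Since $(\mathop{\downarrow}_Y V)^{c}=\mathop{\uparrow}_Y(X\setminus V)$, the forced-in part $\mathop{\downarrow}_Y V$ and the forced-out part $\mathop{\uparrow}_Y(X\setminus V)$ are already exchanged by $\cdot^{c}$, and the remaining free region $Y\setminus(\mathop{\downarrow}_Y V\cup\mathop{\uparrow}_Y(X\setminus V))$ is $\cdot^{c}$-invariant; I would separate a clopen downset on this region compatible with both forced parts and then symmetrize it with the involution so as to enforce $W^{c}=Y\setminus W$. The delicate point, and the main obstacle of the whole proof, is precisely reconciling the order-theoretic extension with the involution constraint on this free region; \textsc{Priestley} separation together with compactness are the tools that should make it go through.

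Finally, I would record an alternative reduction for the converse that localizes the same difficulty: factoring $f=e\circ g$ with $g$ onto and $e$ an inclusion gives $f^*=g^*\circ e^*$, where $g^*$ is an embedding by the forward direction, so $f^*$ is an embedding if and only if $e^*$ is injective. Proving that a \emph{proper} subalgebra $e\colon\alg{C}\hookrightarrow\alg{B}$ yields a non-injective restriction $e^*$ (two prime convex subsets of $\alg{B}$ with the same trace on $C$ but separating some $b\in B\setminus C$) again requires a median-algebra separation lemma of the same strength. This confirms that the extension/separation step is the essential content of the statement.
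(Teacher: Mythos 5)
Your forward direction is essentially sound: injectivity of $f^*$ from surjectivity of $f$, the compact-to-Hausdorff argument, and structure preservation all check out. (One small point to add: you need the order to be \emph{reflected} as well as preserved, i.e., $f^{-1}(I)\subseteq f^{-1}(J)$ implies $I\subseteq J$; this again follows from surjectivity of $f$, but ``preserves $\leq$'' alone does not make an injective morphism an isomorphism onto its image.) Your reduction of the converse, via the double dual, to the statement ``every complete ideal of a closed substructure $\topo{X}$ of $\topo{Y}$ is the trace of a complete ideal of $\topo{Y}$'' is also the correct move.

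However, the proof of that extension statement is where the entire content of the proposition lives, and you have not given it: you explicitly defer the reconciliation of Priestley separation with the involution constraint to tools that ``should make it go through''. This is a genuine gap, not a routine verification. After Priestley separation produces a clopen downset $D$ with $D\cap X=V$, one can pass to $E=D\setminus D^c$, a clopen downset with $E\cap E^c=\varnothing$ and still $E\cap X=V$; but there remains the clopen $\cdot^c$-invariant region $G=Y\setminus(E\cup E^c)$, carrying a fixed-point-free order-reversing involution, on which one must produce a clopen downset $U$ with $U^c=G\setminus U$ that is compatible with the order relative to $E$ and $E^c$. The existence of such a clopen ``downset transversal'' is precisely the nontrivial separation property of bounded strongly complemented \textsc{Priestley} spaces (equivalently, that complete ideals separate $y$ from $y^c$), and it needs its own compactness-and-refinement argument; it does not follow from Priestley separation as stated. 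Your alternative factorization $f=e\circ g$ relocates but does not remove the same difficulty, as you acknowledge. The paper sidesteps all of this by citing that the natural duality for $\var{M}$ is \emph{strong} (\cite{NDFWA}, Chapter 4, Theorem 3.4) and invoking the general lemma that under a strong duality surjections correspond to embeddings (\cite{NDFWA}, Chapter 3, Lemma 2.6). A self-contained proof along your lines would be a genuine contribution, but only once the extension lemma (or the equivalent prime-convex-subset separation theorem for median algebras) is actually proved, since that is exactly what ``strongness'' encodes.
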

\begin{proof}
%Assume that $f$ is an onto homomorphism and let $I, J \in \alg{B}^*$ such that $f^*(I)=F^*(J)$, that is, $f^{-1}(I)=f^{-1}(J)$. For the sake of contradiction, suppose that there is an $i\in I\setminus J$ and let $a\in I$ such that $f(a)=i$. We obtain that $a\in f^*(I)\setminus f^*(J)$, a contradiction.

Stated in the language of Natural Duality, %(prime convex subsets of a median algebra $\alg{A}$ correspond to the sets $u^{-1}(0)$ for homomorphisms $u\colon \alg{A}\to \alg{2}$), 
the dual equivalence of Proposition \ref{prop:fsg} is a strong duality (see \cite[Chapter 4, Therorem 3.4]{NDFWA}). Then, the proof is an application of \cite[Chapter 3, Lemma 2.6]{NDFWA}.
\end{proof}

\section{Homomorphisms between conservative median algebras}
We now use the duality theory apparatus  recalled in Section \ref{sec:prel} to describe median homomorphisms between  (products of) conservative median algebras.

First, we characterize the duals of the conservative median algebras. Let $\alg{P}_0=\struc{P_0, \leq_0, 0_0, 1_0}$ and $\alg{P}_1=\struc{P_1, \leq_1, 0_1, 1_1}$ be two bounded posets. As in Section \ref{sec:ConsMed},    $\alg{P}_0 \amalg \alg{P}_1$ denotes the \emph{coalesced} sum of $\alg{P}_0$ and $\alg{P}_1$, that is, the poset obtained from the disjoint union of $\alg{P}_0$ and $\alg{P}_1$ by identifying $0_0$ with $0_1$, and $1_0$ with $1_1$. We denote by $i_{\alg{P}_k}$ the natural embedding  $i_{\alg{P}_k}: \alg{P}_k \to \alg{P}_0 \amalg \alg{P}_1 $ for $k \in \{0, 1\}$. To simplify notation, we often identify $\alg{P}_k$ with its copy $i_{\alg{P}_k}(\alg{P}_k)$ in $\alg{P}_0 \amalg \alg{P}_1$ for $k \in \{0,1\}$.

If $\struc{\alg{C}, \tau'}$ is a bounded \textsc{Priestley} chain (i.e., a bounded totally ordered \textsc{Priestley} space, see, e.g., \cite{Davey2002}),  $\alg{C}\amalg \alg{C}^\partial$ can be endowed with  an operation $\cdot^c$ and a topology $\tau$, so that  $\struc{\alg{C}\amalg \alg{C}^\partial, \cdot^c, \tau}$ is a bounded strongly complemented \textsc{Priestley} space. Indeed, it suffices  to define 
\begin{itemize}
\item $\tau$ as the final topology relative  to  $i_{\alg{C}}$ and $i_{\alg{C}^\partial}$ (\emph{i.e.}, the finest topology that makes $i_{\alg{C}}$ and $i_{\alg{C}^\partial}$ continuous),
\item  $\cdot^c$ as the function that maps the bottom element $0$ to the top element $1$ and conversely, and that maps each element of $\alg{C}\setminus\{0,1\}$ to its copy in $\alg{C}^\partial$ and conversely.
\end{itemize}
With no danger of ambiguity, we use  $\alg{C}\amalg \alg{C}^\partial$ to denote  $\struc{\alg{C}\amalg \alg{C}^\partial, \cdot^c, \tau}$.

For $\topo{X}\in \class{X}$ and $Y \subseteq X$ set $Y^c=\{x^c \in X\mid x \in Y\}$. Also, for a \textsc{Priestley} space $\struc{\alg{P},\tau}$, let $\Clo{\struc{\alg{P},\tau}}$ be the set of its nonempty proper  clopen downsets ordered by inclusion. Moreover, for a poset $\alg{P}$, let $\struc{\Upb{\alg{P}},\tau}$ be the set of its upsets ordered by inclusion and equipped with the topology $\tau$ which has $\{\{I \in \Upb{\alg{P}}\mid p \not\in I \}\mid p \in P\}\cup \{\{I \in \Upb{\alg{P}}\mid p \in I \}\mid p \in P\}$ as subbasis. If $\alg{P}$ is a chain, then $\struc{\Upb{\alg{P}},\tau}$ is a bounded \textsc{Priestley} space.
\begin {proposition}\label{prop:nzf}
Let $\alg{A}=\struc{A,\me}$ be a median algebra with $|A|\geq 5$. The following conditions are equivalent. 
\begin{enumerate}
\item\label{it:poi01} $\alg{A}$ is conservative.
\item\label{it:poi02} There is a bounded \textsc{Priestley} chain $\struc{\alg{C}, \tau}$ such that $\alg{A}^*$ is isomorphic to $\alg{C} \amalg \alg{C}^\partial$.
\item\label{it:poi03} $\alg{A}$ is the median algebra of  the nonempty proper clopen downsets of a bounded \textsc{Priestley} chain $\struc{\alg{C},\tau}$.
\end{enumerate}
Furthermore, if one of these conditions is satisfied and if $\alg{C}_0=\struc{A,\leq}$ is a chain representation of $\alg{A}$ given by Theorem  \ref{thm:phj},  then $\alg{A}^* \cong \Upb{\alg{C}_0}\amalg\Upb{\alg{C}_0}^\partial$ and $\me$ is the median operation of $\Clo{\Upb{\alg{C}_0}}$.
\end {proposition}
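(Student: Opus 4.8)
The plan is to route everything through the dual equivalence of Proposition \ref{prop:fsg} together with the chain representation of Theorem \ref{thm:phj}. I would first prove the equivalence (\ref{it:poi02}) $\iff$ (\ref{it:poi03}), which is essentially a statement about the functor $\cdot_*$. Since $\alg{A}\cong(\alg{A}^*)_*$, condition (\ref{it:poi02}) holds exactly when $\alg{A}\cong(\alg{C}\amalg\alg{C}^\partial)_*$ for some bounded \textsc{Priestley} chain $\alg{C}$, so it suffices to identify $(\alg{C}\amalg\alg{C}^\partial)_*$ with $\Clo{\alg{C}}$. For this I would analyze the complete ideals of $\alg{C}\amalg\alg{C}^\partial$: such an ideal $W$ is a clopen downset with $x\in W \iff x^c\notin W$, and since $0^c=1$ this already forces $0\in W$ and $1\notin W$; for an interior point $x$ of $\alg{C}$ its complement $x^c$ is the copy of $x$ in $\alg{C}^\partial$, so $W$ is completely determined by its trace $W\cap C$ on the copy of $\alg{C}$. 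I would then check that $W\mapsto W\cap C$ is a bijection onto the nonempty proper clopen downsets of $\alg{C}$ and that it is median-preserving, because the operation on $(\alg{C}\amalg\alg{C}^\partial)_*$ is the restriction of $\me_{2^{X}}$ (pointwise majority of sets) and intersection with $C$ commutes with it.

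For (\ref{it:poi03}) $\implies$ (\ref{it:poi01}) I would use that the downsets of a chain are linearly ordered by inclusion; hence $\Clo{\alg{C}}$ is itself a chain, and the median of a chain is conservative, as already observed in the introduction. This implication needs no hypothesis on $|A|$.

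The remaining implication (\ref{it:poi01}) $\implies$ (\ref{it:poi03}), together with the final assertions, is where the concrete chain is produced. Starting from a conservative $\alg{A}$, Theorem \ref{thm:phj} supplies a total order $\leq$ on $A$ with $\me=\me_\leq$; write $\alg{C}_0=\struc{A,\leq}$. I would take the witnessing \textsc{Priestley} chain to be $\alg{C}=\Upb{\alg{C}_0}$, which is a bounded \textsc{Priestley} chain by the fact recalled just before the statement, and show that $a\mapsto\{I\in\Upb{\alg{C}_0}\mid a\notin I\}$ is a median isomorphism from $\alg{A}$ onto $\Clo{\Upb{\alg{C}_0}}$. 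Each such set is a subbasic clopen downset that contains $\emptyset$ and omits the top upset $A$, hence is nonempty and proper; the map is injective since principal upsets separate the points of $\alg{C}_0$, and it is order-reversing, so it preserves the median because the median is invariant under order duality. Surjectivity onto $\Clo{\Upb{\alg{C}_0}}$ is the representation step. Once (\ref{it:poi03}) is established with $\alg{C}=\Upb{\alg{C}_0}$, dualizing through (\ref{it:poi02}) $\iff$ (\ref{it:poi03}) yields $\alg{A}^*\cong\Upb{\alg{C}_0}\amalg\Upb{\alg{C}_0}^\partial$ and exhibits $\me$ as the median operation of $\Clo{\Upb{\alg{C}_0}}$, which is precisely the concluding clause.

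The main obstacle, I expect, is the identification $(\alg{C}\amalg\alg{C}^\partial)_*\cong\Clo{\alg{C}}$ in full generality: one must verify that the complete-ideal condition genuinely forces the trace description of $W$ and that the correspondence $W\mapsto W\cap C$ is onto in the possibly infinite, topological setting, not merely by a count in the finite case. A secondary difficulty is the surjectivity of $a\mapsto\{I\mid a\notin I\}$ onto the clopen downsets of $\Upb{\alg{C}_0}$, which rests on the \textsc{Priestley} topology; in the finite case it is a direct count, since $\Upb{\alg{C}_0}$ has exactly one more point than $\alg{C}_0$ and therefore exactly $|A|$ nonempty proper clopen downsets, matching $\alg{A}$.
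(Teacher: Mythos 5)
Your overall strategy coincides with the paper's: both arguments run through the dual equivalence of Proposition \ref{prop:fsg} and the chain representation of Theorem \ref{thm:phj}, and your identification of $(\alg{C}\amalg\alg{C}^\partial)_*$ with $\Clo{\alg{C}}$ via the trace map $W\mapsto W\cap C$ (with inverse $\omega\mapsto\omega\cup(\alg{C}^\partial\setminus\omega^c)$) is exactly the computation the paper performs for (\ref{it:poi02}) $\implies$ (\ref{it:poi03}). Your (\ref{it:poi03}) $\implies$ (\ref{it:poi01}) also matches the paper's.

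The one genuine weak point is the implication you route as (\ref{it:poi01}) $\implies$ (\ref{it:poi03}): you propose to verify directly that $a\mapsto r_a=\{I\in\Upb{\alg{C}_0}\mid a\notin I\}$ is an isomorphism onto $\Clo{\Upb{\alg{C}_0}}$, and you yourself flag that surjectivity onto the clopen downsets is only established by counting in the finite case. For an infinite conservative median algebra this step is not proved in your sketch, so as written the argument is incomplete. The paper avoids this by instead proving (\ref{it:poi01}) $\implies$ (\ref{it:poi02}): it invokes Proposition \ref{lem:fgt} to compute $\spec{\alg{A}}$ for $\alg{A}=\struc{A,\me_\leq}$ as the prime filters and prime ideals of $\alg{C}_0$, i.e.\ its upsets and downsets, which gives $\alg{A}^*\cong\Upb{\alg{C}_0}\amalg\Upb{\alg{C}_0}^\partial$ outright; the isomorphism $\alg{A}\cong(\alg{A}^*)_*\cong\Clo{\Upb{\alg{C}_0}}$ --- surjectivity included --- then comes for free from the unit $a\mapsto r_a$ of the dual equivalence (Proposition \ref{prop:fsg} and the remark following it), composed with your own trace identification. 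So the fix is simply to reorder your cycle: prove (\ref{it:poi01}) $\implies$ (\ref{it:poi02}) via Proposition \ref{lem:fgt} and let the duality, rather than a hands-on count, deliver condition (\ref{it:poi03}) and the concluding clause about $\Clo{\Upb{\alg{C}_0}}$.
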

\begin{proof}
(\ref{it:poi01}) $\implies$ (\ref{it:poi02}): According to Theorem \ref{thm:phj}, there is a totally ordered set $\alg{C}_0=\struc{A, \leq}$   such that $\alg{A}=\struc{A, \me_{\leq}}$.   From Proposition \ref{lem:fgt}, we know that the prime convex subsets of $\struc{A, \me_\leq}$ are the prime filters and prime ideals of $\alg{C}_0$, that is, the upsets of $\alg{C}_0$ and  the downsets of $\alg{C}_0$.  Then  $\alg{A}^*$ is isomorphic to  $\Upb{\alg{C}_0}\amalg \Upb{\alg{C}_0}^\partial$.

(\ref{it:poi02}) $\implies$ (\ref{it:poi03}): The median algebra $\alg{A}$ is isomorphic to  $(\alg{C} \amalg \alg{C}^\partial)_*$. If $W$ is a complete ideal of $\alg{C} \amalg \alg{C}^\partial$ 
then $\omega_W:=W\cap  \alg{C}$ belongs to $\Clo{\struc{\alg{C}, \tau}}$. Conversely, if $\omega\in \Clo{\struc{\alg{C}, \tau}}$ then $W_\omega:=\omega \cup (\alg{C}^\partial \setminus \omega^c)$ is a complete ideal of $\alg{C} \amalg \alg{C}^\partial$. It is not difficult to check that the maps $\omega_{-}\colon(\alg{C} \amalg \alg{C}^\partial)_* \to \Clo{\struc{\alg{C}, \tau}}$ and $W_{-}\colon \Clo{\struc{\alg{C}, \tau}} \to (\alg{C} \amalg \alg{C}^\partial)_*$ are median homomorphisms such that one is the inverse of the other. We conclude that up to isomorphism, $\me$ is the median operation of $\Clo{\struc{\alg{C}, \tau}}$.

(\ref{it:poi03}) $\implies$ (\ref{it:poi01}): Follows  straightforwardly since $\me$ is the median operation of a chain.

The proof of the first and the second claims of the last statement are given in the proof of (\ref{it:poi01}) $\implies$ (\ref{it:poi02}) and  (\ref{it:poi02}) $\implies$ (\ref{it:poi03}), respectively.
\end{proof}

%From Proposition \ref{prop:nzf} it follows that the totally ordered set $\alg{C}$ given in Theorem  \ref{thm:phj} is unique, up to (dual) isomorphism.

\begin {corollary}\label{cor:grs}
Let $\alg{A}$ be a median algebra. If $\alg{C}$ and $\alg{C}'$ are two chains such that $\alg{A}\cong \struc{\alg{C}, \me_{\alg{C}}}$ and 
$\alg{A}\cong \struc{\alg{C}', \me_{\alg{C}'}}$, then $\alg{C}$ is order isomorphic or dual order isomorphic to $\alg{C}'$.
\end {corollary}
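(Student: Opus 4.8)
The plan is to bypass the duality and reconstruct the order directly from the median operation, up to reversal. Composing the two given isomorphisms yields a median-algebra isomorphism $\phi\colon \struc{C,\me_{\alg{C}}}\to\struc{C',\me_{\alg{C}'}}$. Transporting the order of $\alg{C}'$ back along $\phi$, I define a total order $\preceq$ on $C$ by $x\preceq y \iff \phi(x)\leq_{\alg{C}'} \phi(y)$, so that $\struc{C,\preceq}$ is order isomorphic to $\alg{C}'$ via $\phi$. Because $\phi$ is simultaneously a median isomorphism and, by construction, an order isomorphism onto $\alg{C}'$, the $\preceq$-median coincides with $\me_{\alg{C}}$: the $\preceq$-middle of a triple is sent by $\phi$ to the $\leq_{\alg{C}'}$-middle of its image, which equals $\phi(\me_{\alg{C}}(\cdot))$, whence $\me_{\preceq}=\me_{\alg{C}}$ by injectivity of $\phi$. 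Thus everything reduces to the following claim: \emph{two total orders $\leq$ and $\preceq$ on a common set $C$ with $\me_{\leq}=\me_{\preceq}$ coincide or are mutually reverse.} Granting this, $\struc{C,\preceq}$ is either $\alg{C}$ or $\alg{C}^\partial$, and composing with $\phi$ exhibits $\alg{C}'$ as order isomorphic to $\alg{C}$ or to $\alg{C}^\partial$, which is exactly the conclusion.

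To prove the claim I would argue through betweenness. For a chain, $\me_{\leq}(a,b,c)$ is the middle of $a,b,c$, so for each triple of pairwise distinct elements the unique element lying strictly between the other two is $\me_{\leq}(a,b,c)$; since $\me_{\leq}=\me_{\preceq}$, the two orders select the \emph{same} middle element on every triple. But a total order on a three-element set is determined by its middle element up to reversal (only $a<b<c$ and $c<b<a$ have middle $b$). Hence on each $3$-element subset, $\leq$ and $\preceq$ agree or are exactly reversed. Assigning to each unordered pair $\{a,b\}$ a sign $+$ or $-$ according to whether $\leq$ and $\preceq$ order $a,b$ the same way or oppositely, the triple observation says that within any triple all three pairs carry the same sign. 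A short connectivity argument then forces the sign to be globally constant: any two pairs are linked through at most two triples (directly when they share a point, otherwise via an auxiliary element), so all pairs share one sign. A global $+$ gives $\preceq=\leq$ and a global $-$ gives $\preceq=\leq^\partial$.

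The cases $|C|\leq 2$ are immediate, since on a set with at most two elements any two total orders are equal or mutually reverse; so the argument is uniform in the cardinality of $A$ and needs no appeal to $|A|\geq 5$ or to Theorem \ref{thm:phj}. The main obstacle is precisely the sign-constancy step: the local, per-triple information must be shown to cohere globally, and one must check that the chaining through auxiliary elements is always available, which it is as soon as $|C|\geq 3$. One could instead argue dually when $|A|\geq 5$: by Proposition \ref{prop:nzf} both chains yield $\alg{A}^*\cong \Upb{\alg{C}}\amalg\Upb{\alg{C}}^\partial$, and one would recover the Priestley chain, hence $\alg{C}$, up to reversal from the two order-dual branches interchanged by $\cdot^c$; but that route carries extra bookkeeping about the identified bottom and top elements, so I prefer the self-contained betweenness argument above.
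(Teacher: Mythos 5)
Your argument is correct, but it takes a genuinely different route from the paper. The paper offers no standalone proof: the corollary is meant to fall out of Proposition \ref{prop:nzf} via the duality, i.e.\ both chain representations force $\alg{A}^*\cong \Upb{\alg{C}}\amalg\Upb{\alg{C}}^\partial\cong \Upb{\alg{C}'}\amalg\Upb{\alg{C}'}^\partial$, and one then recovers $\alg{C}$ up to reversal from the two order-dual branches. That route inherits the $|A|\geq 5$ hypothesis of Proposition \ref{prop:nzf}, and it quietly requires checking that an isomorphism of the coalesced sums either preserves or interchanges the two branches (plus separate handling of small algebras). Your proof replaces all of this with the purely order-theoretic observation that a total order is reconstructible from its median operation up to reversal: the reduction via transport of the order along the composite isomorphism $\phi$ is sound (injectivity of $\phi$ gives $\me_{\preceq}=\me_{\alg{C}}$), the per-triple rigidity (only $a<b<c$ and its reverse have middle $b$) is correct, and the sign-coherence argument chaining overlapping pairs through triples is a complete and standard connectivity step, available whenever $|C|\geq 3$, with $|C|\leq 2$ trivial. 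What your approach buys is uniformity in the cardinality, independence from the duality machinery of Section \ref{sec:prel}, and a self-contained two-paragraph proof; what the paper's approach buys is consistency with the dual-space picture it is developing, where the corollary is a one-line remark once Proposition \ref{prop:nzf} is in place.
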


Given a conservative median algebra $\alg{A}=\struc{A, \me}$ (with $|A|\geq 5$), Theorem \ref{thm:phj} provides with a total order $\leq_\alg{A}$ on $A$  such that $\me=\me_{\leq_{\alg{A}}}$. Corollary \ref{cor:grs} states that $\struc{A,\leq_{\alg{A}}}$ is unique up to isomorphisms and dual isomorphisms. We call $\leq_{\alg{A}}$ the \emph{chain ordering of $\alg{A}$} and we denote $\struc{A, \leq_\alg{A}}$ by $\alg{C}(\alg{A})$  .

%Hence, if $\alg{A}$ is a conservative median algebra with at least five elements, let  $\alg{C}(\alg{A})$ be the chain (unique up to order-isomorphism) such that $\alg{A}\cong \struc{\alg{C}(\alg{A}), \me_\alg{C(\alg{A})}}$ and let $f_{\alg{A}}:\alg{A} \to \struc{\alg{C}(\alg{A}), \me_\alg{C(\alg{A})}}$ be the corresponding isomorphism.  If $f:\alg{A} \to \alg{B}$ is a map between two median algebras, the map $f':\alg{C}(\alg{A})  \to \alg{C}(\alg{B})$ defined as $f'=f_{\alg{B}} \circ f \circ f_{\alg{A}}^{-1}$ is said to be \emph{induced by $f$}.

We use Proposition \ref{prop:nzf} to characterize median homomorphisms between conservative median algebras. Recall that a {map} between two posets is \emph{monotone} if it is isotone or antitone.

\begin {proposition}
Let $\alg{A}$ and $\alg{B}$ be two conservative median algebras with at least five elements. A map $f:\alg{A}\to \alg{B}$  is a median homomorphism if and only if it is monotone with respect to the chain orderings of $\alg{A}$ and $\alg{B}$. 
%$f\colon \alg{C}(\alg{A})\to \alg{C}(\alg{A}) $ is  monotone.
\end {proposition}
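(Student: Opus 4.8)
The plan is to use Theorem~\ref{thm:phj} to replace $\alg{A}$ and $\alg{B}$ by their chain representations, thereby reducing the statement to a fact about a single chain median. Since $\alg{A}$ and $\alg{B}$ each have at least five elements, Theorem~\ref{thm:phj} provides chain orderings $\leq_{\alg{A}}$ on $A$ and $\leq_{\alg{B}}$ on $B$ with $\me=\me_{\leq_{\alg{A}}}$ and $\me=\me_{\leq_{\alg{B}}}$, respectively. On a chain, $\me_{\leq}(x,y,z)$ is exactly the middle one of $x,y,z$ for the order $\leq$, so $f$ is a median homomorphism if and only if, regarded as a map $\alg{C}(\alg{A})\to\alg{C}(\alg{B})$, it preserves this ``middle element'' operation. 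Because being \emph{monotone} (isotone or antitone) does not depend on whether one uses $\leq_{\alg{A}}$ or its dual, the statement is insensitive to the ambiguity recorded in Corollary~\ref{cor:grs}. Thus it suffices to prove that a map between two chains preserves the median operation if and only if it is isotone or antitone.

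For sufficiency I would argue directly. Given three elements, list them in $\leq_{\alg{A}}$-order as $x\leq y\leq z$, so that $\me(x,y,z)=y$. If $f$ is isotone then $f(x)\leq f(y)\leq f(z)$, while if $f$ is antitone then $f(z)\leq f(y)\leq f(x)$; in either case $f(y)$ is the middle element of $f(x),f(y),f(z)$, whence $f(\me(x,y,z))=f(y)=\me(f(x),f(y),f(z))$. Possible equalities among the images cause no difficulty, so every monotone map is a median homomorphism.

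The substantive direction is necessity, and the main obstacle will be extracting \emph{global} monotonicity from the purely local information that median-preservation supplies. First I would record the key consequence: whenever $x<y<z$ in $\alg{C}(\alg{A})$, the identity $f(y)=\me(f(x),f(y),f(z))$ forces $f(y)$ to lie between $f(x)$ and $f(z)$ in $\alg{C}(\alg{B})$; in particular $f$ has no strict interior extremum, i.e.\ one can never have $f(y)>\max\{f(x),f(z)\}$ nor $f(y)<\min\{f(x),f(z)\}$. I would then assume, toward a contradiction, that $f$ is neither isotone nor antitone, so that there exist a strictly decreasing pair $p<q$ with $f(p)>f(q)$ and a strictly increasing pair $r<s$ with $f(r)<f(s)$. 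The goal is to produce from these two pairs three points $u<v<w$ at which $f(v)$ is a strict maximum or strict minimum, contradicting the no-interior-extremum property.

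I expect the final combination to be the delicate point, but it is routine once one isolates two auxiliary facts, both immediate from the betweenness observation applied to suitable triples: an increasing pair $r<s$ forces $f(t)\leq f(r)$ for all $t\leq r$, $f(t)\geq f(s)$ for all $t\geq s$, and $f(r)\leq f(t)\leq f(s)$ for $r\leq t\leq s$ (and dually for a decreasing pair, so that $f$ is non-increasing across all points lying between $p$ and $q$). With these in hand, a short case analysis on the relative positions of $\{p,q\}$ and $\{r,s\}$ — the two intervals being disjoint, overlapping, or nested — always yields in each case the desired triple $u<v<w$ exhibiting a forbidden strict local extremum, completing the contradiction and hence the proof.
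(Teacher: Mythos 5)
Your proof is correct, but it takes a genuinely different route from the paper's. The paper proves necessity through the duality machinery of Section~3: it reduces to the case where $f$ is onto, invokes Proposition~\ref{prop:inj} to view $f^*$ as an embedding of $\Upb{\alg{C}(\alg{B})}\amalg\Upb{\alg{C}(\alg{B})}^\partial$ into $\Upb{\alg{C}(\alg{A})}\amalg\Upb{\alg{C}(\alg{A})}^\partial$, and reads off isotonicity or antitonicity of $f$ according to whether $f^*$ carries upsets into $\Upb{\alg{C}(\alg{A})}$ or into $\Upb{\alg{C}(\alg{A})}^\partial$. You instead use Theorem~\ref{thm:phj} to reduce everything to the purely order-theoretic claim that a map between chains preserves the middle-element operation if and only if it is monotone --- which is exactly the paper's Corollary~\ref{cor:mpo}, here proved directly rather than deduced from the proposition --- via the betweenness consequence of median preservation plus propagation facts and a case analysis. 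Your auxiliary facts are correct and do suffice: in each of the disjoint, overlapping and nested configurations of the decreasing pair $(p,q)$ and the increasing pair $(r,s)$, the inequalities they yield clash (in some configurations the contradiction comes from chaining two betweenness applications rather than from a single forbidden strict-extremum triple, but that is only a presentational point), so the sketched case analysis closes. What your elementary route buys is complete independence from the duality apparatus and from Proposition~\ref{prop:inj}; what the paper's route buys is uniformity with the rest of Section~4, where the same dual-space picture immediately yields the componentwise decomposition of Proposition~\ref{prop:jgq}.
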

\begin{proof}
(Necessity) We may assume that $f$ is onto. According to Proposition \ref{prop:inj}, the map \[f^*: \Upb{\alg{C}(\alg{B})} \amalg \Upb{\alg{C}(\alg{B})}^\partial \hookrightarrow \Upb{\alg{C}(\alg{A})} \amalg \Upb{\alg{C}(\alg{A})}^\partial\] is a $\class{X}$-embedding.

If the range of $f^*$ is equal to $\{0,1\}$, then $\alg{B}$ is the one-element median algebra and $\alg{C}(\alg{B})$ is the one-element chain, and the result follows trivially. Hence, we may assume that there is a $I\in \Upb{\alg{C}(\alg{B})}$ such that $f^*(I)\not\in\{0,1\}$.
If $f^*(I) \in \Upb{\alg{C}(\alg{A})}$, then  $f^*(\Upb{\alg{C}(\alg{B})})\subseteq  \Upb{\alg{C}(\alg{A})}$ since $f^*$ is isotone. We prove that $f\colon \alg{C}(\alg{A})\to \alg{C}(\alg{B})$ is isotone. Suppose that $a\leq b$ for some $a,b\in \alg{C}(\alg{A})$.  Then $f^*\big([f(a))\big)$ contains $b$ since it is an upset that contains $a$ and $a\leq b$. It means that $f(a)\leq f(b)$, which is the desired result.

If $f^*(I) \in \Upb{\alg{C}(\alg{A})}^\partial$, we conclude in a similar way that $f\colon \alg{C}(\alg{A})\to \alg{C}(\alg{B})$ is antitone.

(Sufficiency)  If $f\colon \alg{C}(\alg{A})\to \alg{C}(\alg{B})$ is isotone, then it maps upsets to upsets and downsets to downsets. If it is antitone, it maps upsets to downsets and conversely. It means that $f^*$ is valued in $\Upb{\alg{C}(\alg{A})} \amalg \Upb{\alg{C}(\alg{A})}^\partial$. It is then straightforwad to check that $f^*$ is a $\class{X}$-morphism.    
\end{proof}

\begin {corollary}\label{cor:mpo}
Let $\alg{C}$ and $\alg{C}'$ be two chains. A map $f:\alg{C}\to \alg{C}'$ is a median homomorphism if and only if it is monotone.
\end {corollary}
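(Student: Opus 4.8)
The plan is to obtain both implications from the fact that on a chain $\me(x,y,z)$ is the middle of $x,y,z$, using the preceding Proposition for the bulk of the argument and handling the small chains by hand. I would first dispose of sufficiency, which needs no duality and holds for chains of every cardinality: if $f\colon\alg{C}\to\alg{C}'$ is isotone it preserves the linear order weakly, hence sends the middle element of a triple to the middle element of its image, so that $f(\me(x,y,z))=\me(f(x),f(y),f(z))$; if $f$ is antitone the order is reversed but the middle element is unchanged and the same identity holds. Thus every monotone map is a median homomorphism.

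For necessity I would begin with the generic case $|C|,|C'|\ge 5$. Here $\me_{\alg{C}}=\me_{\leq}$ for the given order $\leq$, so by Theorem \ref{thm:phj} and Corollary \ref{cor:grs} the chain ordering $\leq_{\alg{C}}$ is either $\leq$ or its dual, and likewise for $\alg{C}'$. The preceding Proposition then gives that a median homomorphism is monotone for the chain orderings, and since replacing an order by its dual merely interchanges isotone and antitone maps while preserving the property of being monotone, $f$ is monotone for the given orders as well.

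The remaining case, in which one of the chains has at most four elements, is the step I expect to be the main obstacle, since the Proposition no longer applies. When the codomain is small while $|C|\ge 5$, I would embed $\alg{C}'$ order-isomorphically into a chain with at least five elements; such an embedding is a median embedding and reflects monotonicity, so composing it with $f$ reduces to the generic case. The genuinely irreducible situation is a small domain $|C|\le 4$, where one cannot enlarge the domain, since an extension of $f$ to a larger chain need not remain a median homomorphism. Here I would argue directly from the fact that a median homomorphism preserves betweenness: $x<y<z$ forces $f(y)=\me(f(x),f(y),f(z))$, so $f(y)$ lies between $f(x)$ and $f(z)$ in $\alg{C}'$. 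A short propagation argument then shows that the strict direction of $f$ cannot reverse—for instance $f(x)<f(y)$ in a triple $x<y<z$ forces $f(y)\le f(z)$, and dually—so any betweenness-preserving map between chains is isotone or antitone; for $|C|\le 4$ only finitely many triples arise and this, or a direct inspection, completes the proof.
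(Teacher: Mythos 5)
Your argument is correct and, for chains with at least five elements, coincides with the paper's: the corollary is stated there without proof as an immediate consequence of the preceding Proposition together with Theorem~\ref{thm:phj} and Corollary~\ref{cor:grs}, exactly as you derive it. Your only genuine addition is the careful treatment of chains with at most four elements (embedding a small codomain into a larger chain, and a direct betweenness/propagation argument for a small domain), together with the elementary duality-free proof of sufficiency; these steps are sound, and the small-cardinality case is in fact needed for the corollary as stated, since the paper's Proposition only covers algebras with at least five elements.
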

\begin {remark}\label{rem:vcf}
Note that Corollary \ref{cor:mpo} only holds for chains. Indeed, Fig.\ \ref{fig:lop01}  gives  an example of a monotone map that is not a median homomorphism, and Fig.\ \ref{fig:lop02}  gives an example of median homomorphism that is not monotone.
\begin{figure}
\hspace*{\fill}
\subfigure[A monotone map which is not a median homomorphism.]{\label{fig:lop01}
\begin{tikzpicture}[scale=\tkzscl]
\draw (0,0) node{$\bullet$}--(-1,1)node{$\bullet$}--(0,2)node{$\bullet$} ;
\draw (0,0)--(1,1)node{$\bullet$}--(0,2) ;
\draw (3,0)node{$\bullet$}--(3,2)node{$\bullet$};
\draw[->,>=latex, dashed] (0,0) to[bend left=10] (3,0);
\draw[->,>=latex, dashed] (-1,1) to[bend left=10] (3,2);
\draw[->,>=latex, dashed] (1,1) to[bend right=10] (2.95,1.95);
\draw[->,>=latex, dashed] (0,2) to[bend left=10] (2.95,2.05);
\end{tikzpicture}}
\hfill
\subfigure[A median homomorphism which is not monotone.]{\label{fig:lop02}
\begin{tikzpicture}[scale=\tkzscl]
\draw (0,2)--(0,2.75) node{$\bullet$};
\draw (0,0) node{$\bullet$} --(-1,1)node{$\bullet$} ;
\draw (0,0)--(1,1) node{$\bullet$};
\draw (1,1)--(0,2) node{$\bullet$} ;
\draw (-1,1)--(0,2);
%\draw (3,0)node{$\bullet$}--(3,1)node{$\bullet$}--(3,2)node{$\bullet$};
%
\draw (3,0) node{$\bullet$}--(2,1)node{$\bullet$}--(3,2)node{$\bullet$} ;
\draw (3,0)--(4,1)node{$\bullet$}--(3,2) ;

\draw[->,>=latex, dashed] (0,0) to[bend left=10] (1.95,0.95);
\draw[->,>=latex, dashed] (-1,1) to[bend left=10] (1.95,1.05);
\draw[->,>=latex, dashed] (1,1) to[bend right=20] (2.95, 0.05);
\draw[->,>=latex, dashed] (0,2) to[bend left=40] (3.05, 0.07);
\draw[->,>=latex, dashed] (0,2.75) to[bend left=10] (3.95, 1);
\end{tikzpicture}}
\hspace*{\fill}
\caption{Examples for Remark \ref{rem:vcf}.}
\end{figure}
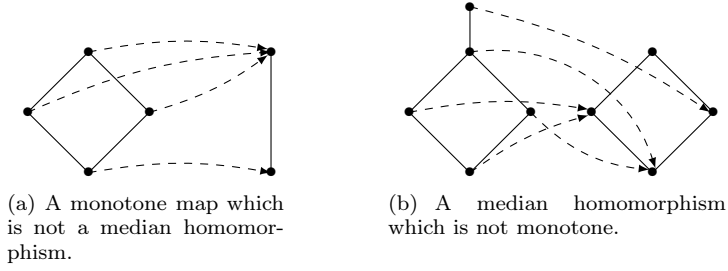
\end {remark}

Since the class of conservative median algebras is  closed under homomorphic images, we obtain the following corollary.

\begin {corollary}
Let $\alg{A}$ and $\alg{B}$ be two median algebras and $f:\alg{A}\to\alg{B}$.  If $\alg{A}$ is conservative, and if $|A|, |f(A)|\geq 5$, then $f$ is a median homomorphism if and only if $f(\alg{A})$ is a conservative  median subalgebra of $\alg{B}$  and $f$ is monotone with respect to the chain orderings of $\alg{A}$ and $f(\alg{A})$.
\end {corollary}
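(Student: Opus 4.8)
The plan is to reduce the whole statement to the Proposition characterizing median homomorphisms between conservative median algebras (the one immediately preceding Corollary \ref{cor:mpo}) by factoring $f$ through its image $f(\alg{A})$. The only structural facts I will need are that the image of a median homomorphism is a median subalgebra, that the class of conservative median algebras is closed under homomorphic images (as recalled just before the statement), and that a median subalgebra carries the restriction of the ambient median operation.

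For necessity, suppose $f$ is a median homomorphism. Then $f(\alg{A})$, being the homomorphic image of $\alg{A}$, is a median subalgebra of $\alg{B}$; and since $\alg{A}$ is conservative and conservative median algebras are closed under homomorphic images, $f(\alg{A})$ is conservative as well. I then view $f$ as a surjective homomorphism $f\colon \alg{A}\to f(\alg{A})$ between two conservative median algebras, each of which has at least five elements by hypothesis. Applying the foregoing Proposition immediately yields that $f$ is monotone with respect to the chain orderings of $\alg{A}$ and $f(\alg{A})$.

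For sufficiency, assume that $f(\alg{A})$ is a conservative median subalgebra of $\alg{B}$ and that $f$ is monotone with respect to the chain orderings of $\alg{A}$ and $f(\alg{A})$. Since $\alg{A}$ and $f(\alg{A})$ are conservative median algebras with at least five elements, the same Proposition gives that $f\colon \alg{A}\to f(\alg{A})$ is a median homomorphism. As $f(\alg{A})$ is a median subalgebra of $\alg{B}$, its median operation is the restriction of $\me_{\alg{B}}$, so the value of $\me(f(x),f(y),f(z))$ computed in $f(\alg{A})$ coincides with the value computed in $\alg{B}$; equivalently, $f$ is the composite of the homomorphism $f\colon\alg{A}\to f(\alg{A})$ with the inclusion homomorphism $f(\alg{A})\hookrightarrow\alg{B}$, and hence is a median homomorphism $\alg{A}\to\alg{B}$.

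I do not expect any genuine obstacle here, since the corollary is essentially a repackaging of the preceding Proposition; the work is in the bookkeeping. The two points requiring a little care are verifying that \emph{both} $\alg{A}$ and its image satisfy the five-element hypothesis needed to invoke the Proposition (which is precisely why $|f(A)|\geq 5$ is assumed), and observing that ``median subalgebra'' means the operation restricts, so that being a homomorphism into $f(\alg{A})$ and into $\alg{B}$ amount to the same thing. Finally, I will stress that the monotonicity in the conclusion is taken with respect to the chain ordering of $f(\alg{A})$, not that of $\alg{B}$: since $f$ need not be onto $\alg{B}$, the relevant comparison is with the image, and the chain orderings of $f(\alg{A})$ and $\alg{B}$ need not be related.
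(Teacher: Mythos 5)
Your proposal is correct and follows exactly the route the paper intends: the paper offers no separate proof, merely the remark that the class of conservative median algebras is closed under homomorphic images, and your factorization of $f$ through $f(\alg{A})$ followed by an appeal to the preceding Proposition is precisely that argument spelled out. The bookkeeping points you flag (both five-element hypotheses, and that the subalgebra operation is the restriction of $\me_{\alg{B}}$) are handled correctly.
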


%\todo[inline]{Can we obtain a general result about distributive lattice by considering that it is obtained by 'gluing' chains?}

%Let $\struc{\alg{L}, \me_\alg{L}}$ and $\struc{\alg{L}', \me_{\alg{L}'}}$ be two median algebras associated to two distributive lattices $\alg{L}$ and $\alg{L}'$ and $f:L\to L'$. 

The dual equivalence between $\class{M}$ and $\class{X}$ turns finite products into finite coproducts. This property can be used to characterize median homomorphisms between finite products of chains. If $f_i:A_i \to A'_i$ ($i \in [n]$) is a family of maps, let $(f_1, \ldots, f_n):A_1\times \cdots \times A_n \to A'_1\times \cdots \times A'_n$ be defined by
\[
(f_1, \ldots, f_n)(x_1, \ldots, x_n):=(f_1(x_1), \ldots, f_n(x_n)).
\]

The following proposition essentially states that median homomorphisms between finite products of chains necessarily decompose componentwise.

\begin {proposition}\label{prop:jgq}
Let $\alg{A}=\alg{C}_1\times  \cdots \times \alg{C}_k$ and  $\alg{B}=\alg{D}_1\times \cdots\times \alg{D}_n$ be two finite products of  chains. Then  $f:\alg{A}\to \alg{B}$ is a median homomorphism if and only if there exist $\sigma:[n]\to[k]$ and monotone maps $f_i:\alg{C}_{\sigma(i)}\to \alg{D}_i$ for $i\in [n]$ such that $f=(f_{\sigma(1)}, \ldots, f_{\sigma(n)})$.
\end {proposition}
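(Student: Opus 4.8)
The plan is to treat the two implications separately, using the dual equivalence of Proposition~\ref{prop:fsg} for the harder direction and reducing everything to the case of a single target chain, where Corollary~\ref{cor:mpo} applies.

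\emph{Sufficiency.} Suppose $f$ is given by a map $\sigma\colon[n]\to[k]$ and monotone maps $f_i\colon\alg{C}_{\sigma(i)}\to\alg{D}_i$, so that the $i$-th component of $f$ is $f_i\circ\pi_{\sigma(i)}$, where $\pi_j$ denotes the $j$-th projection. Since the median operation of a product is computed componentwise, I would simply verify the homomorphism equation coordinate by coordinate: for $\mathbf{x},\mathbf{y},\mathbf{z}\in A$ the $i$-th coordinate of $f(\me(\mathbf{x},\mathbf{y},\mathbf{z}))$ equals $f_i(\me(x_{\sigma(i)},y_{\sigma(i)},z_{\sigma(i)}))$, and since $f_i$ is monotone it is a median homomorphism between chains by Corollary~\ref{cor:mpo}, so this coincides with the $i$-th coordinate of $\me(f(\mathbf{x}),f(\mathbf{y}),f(\mathbf{z}))$. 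As this holds for every $i\in[n]$, $f$ is a median homomorphism.

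\emph{Necessity.} The first step is a reduction. Each projection $\pi_i\colon\alg{B}\to\alg{D}_i$ is a median homomorphism, hence so is $\pi_i\circ f\colon\alg{A}\to\alg{D}_i$, and $f$ is recovered from its components. Thus it suffices to prove that every median homomorphism $g\colon\alg{C}_1\times\cdots\times\alg{C}_k\to\alg{D}$ into a single chain $\alg{D}$ factors as $g=h\circ\pi_j$ for some $j\in[k]$ and some monotone $h\colon\alg{C}_j\to\alg{D}$; applying this to $g=\pi_i\circ f$ then yields the required $\sigma(i):=j$ and $f_i:=h$. For this factorization I would pass to the duals. Since finite products are sent to finite coproducts, $\alg{A}^*\cong\alg{C}_1^*\oplus\cdots\oplus\alg{C}_k^*$, and this coproduct is realized by amalgamating the bottom elements $0$ and the top elements $1$ of the summands in their disjoint union; consequently two elements lying in distinct summands and distinct from $0$ and $1$ are incomparable. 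Dually, $g^*\colon\alg{D}^*\to\alg{A}^*$ is a continuous, order- and complement-preserving map fixing $0$ and $1$. Because $\alg{D}$ is a chain, its prime convex subsets are exactly its upsets and downsets (cf. Proposition~\ref{lem:fgt}), and the upsets form a chain $\alg{E}$ running from $0=\varnothing$ to $1=D$ inside $\alg{D}^*$. As $g^*$ is order-preserving, its restriction to $\alg{E}$ has a totally ordered image; by the incomparability just noted, a totally ordered subset of $\alg{A}^*$ that contains an element distinct from $0$ and $1$ is confined to a single summand $\alg{C}_j^*$. Since each summand is closed under $\cdot^c$ (the complement of a prime convex set pulled back from coordinate $j$ is again pulled back from coordinate $j$) and $\alg{E}^\partial=\alg{E}^c$, the whole image $g^*(\alg{D}^*)$ lies in $\alg{C}_j^*$, so $g^*$ factors through the coproduct injection $\pi_j^*\colon\alg{C}_j^*\to\alg{A}^*$. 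Dualizing this factorization, and using that $\cdot_*$ inverts $\cdot^*$, gives $g=h\circ\pi_j$ with $h\colon\alg{C}_j\to\alg{D}$ a median homomorphism between chains, hence monotone by Corollary~\ref{cor:mpo}.

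The one degenerate possibility is that $g^*(\alg{E})\subseteq\{0,1\}$, so that no element distinct from $0$ and $1$ is available to pin down a summand; then $g^*$ sends every prime convex subset of $\alg{D}$ to a trivial one, which forces $g$ to be constant (the up- and downsets of a chain separate its points), and a constant map factors through any projection. I expect the core obstacle to be precisely the highlighted step, namely showing that the connected dual image $g^*(\alg{D}^*)$ is confined to a single coproduct summand, together with the bookkeeping needed to cover the small chains and products to which the cardinality hypotheses of the earlier results do not literally apply; the argument above is designed to use only Proposition~\ref{lem:fgt} and Corollary~\ref{cor:mpo}, both of which hold for chains of arbitrary size.
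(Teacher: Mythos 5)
Your proof is correct and follows essentially the same route as the paper: dualize, use that $\alg{A}^*$ and $\alg{B}^*$ are coproducts of the duals of the chains, and show that the dual map carries each summand of $\alg{B}^*$ into a single summand of $\alg{A}^*$ before dualizing back (your composition with $\pi_i$ is just the algebra-side formulation of the paper's restriction of $f^*$ to the substructure $\alg{D}_i^*$, via Proposition~\ref{prop:inj}). The only difference is that you actually justify the key confinement step --- via total orderedness of $g^*(\alg{E})$, incomparability across summands, and closure under $\cdot^c$ --- and handle the constant/degenerate case, both of which the paper's proof asserts without detail.
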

\begin{proof}
The condition is clearly sufficient. To prove that it is necessary, let $\alg{A}$, $\alg{B}$ and $f$ be as in the statement.  The map $f^*= \alg{D}_1^{*} \oplus\cdots \oplus \alg{D}_n^{*} \to  \alg{C}_1^{*} \oplus\cdots \oplus \alg{C}_k^{*} $ is an $\class{X}$-morphism. Let $i \in [n]$. Since $\alg{D}_i^*$ is a  $\class{X}$-substructure of $\alg{B}^*\cong\alg{D}_1^{*} \oplus\cdots \oplus \alg{D}_n^{*}$, the map $f^*\vert_{\alg{D}_i^*}$ is an $\class{X}$-morphism from $\alg{D}_i^*$ to $\alg{A}^*\cong \alg{C}_1^{*} \oplus\cdots \oplus \alg{C}_k^{*}$. Hence, there is a $\sigma(i)\in[k]$ such that $f^*\vert_{\alg{D}_i^*}$ is valued in $\alg{C}_{\sigma(i)}^{*}$. It follows that the diagram in Fig.~\ref{fig:jgq01} commutes, and by duality, so is the diagram in Fig.~\ref{fig:jgq02}. Hence, it suffices to define $f_{\sigma_i}$ as $(f^*\vert_{\alg{D}_i^*})^*$ to conclude the proof.
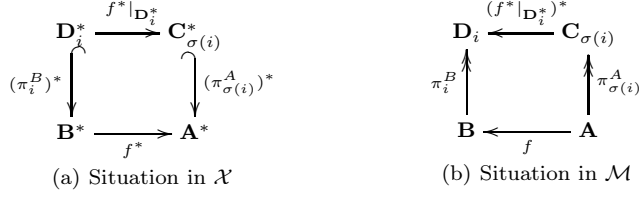
\begin{figure}
\hspace*{\fill}
\subfigure[Situation in $\class{X}$]{\label{fig:jgq01}\xymatrix{\alg{D}_{i}^*\ar[r]^{f^*\vert_{\alg{D}_i^*}}\ar@{^{(}->}[d]_{(\pi^{B}_i)^*} & \alg{C}^*_{\sigma(i)} \ar@{_{(}->}[d]^{(\pi^{A}_{\sigma(i)})^*} \\
\alg{B}^*\ar[r]_{f^*} & \alg{A}^*}}
\hfill
\subfigure[Situation in $\class{M}$]{\label{fig:jgq02}\xymatrix{\alg{D}_i & \alg{C}_{\sigma(i)}\ar[l]_{(f^*\vert_{\alg{D}_i^*})^*}  \\
\alg{B}\ar@{->>}[u]^{\pi^{B}_i} & \alg{A} \ar[l]^{f} \ar@{->>}[u]_{\pi^{A}_{\sigma(i)}} }}
\hspace*{\fill}
\caption{Proof of Proposition \ref{prop:jgq}}
\end{figure}
\end{proof}

%The following corollary is pertaining to aggregation function theory. It essentially states that an aggregation function on ordinal scales is median-preserving if and only if it is dictatorial and monotone.
 If $A=A_1\times \cdots \times A_n$ and $i \in [n]$, then we denote  the  projection map from $A$ onto $A_i$ by $\pi^A_i$, or simply by $\pi_i$ if there is no danger of ambiguity. %\textcolor{red}{Note that any map $f\colon A \to A_1 \times \cdots \times A_n$ valued in a Cartesian product can be considered as a tuple-valued function: $f(\bfa)=(f_1(\bfa), \cdots, f_n(\bfa )$ where $f_i:A \to A_i$ for every $i$}

\begin {corollary}
Let $\alg{C}_1, \ldots, \alg{C}_n$ and $\alg{D}$ be chains. A map $f:\alg{C}_1\times \cdots \times \alg{C}_n\to \alg{D}$ is a median homomorphism if and only if there is a $j\in [n]$ and a monotone map $g:\alg{C}_j \to \alg{D}$ such that $f=g\circ \pi_j$.
\end {corollary}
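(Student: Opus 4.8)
The plan is to obtain this statement as the special case of Proposition~\ref{prop:jgq} in which the codomain is a product of a single chain, together with a direct verification of the easy direction. I would first dispose of sufficiency: suppose $f=g\circ\pi_j$ for some $j\in[n]$ and some monotone $g:\alg{C}_j\to\alg{D}$. The projection $\pi_j:\alg{C}_1\times\cdots\times\alg{C}_n\to\alg{C}_j$ is a median homomorphism, since $\me$ on a finite product is computed coordinatewise and hence $\pi_j$ preserves it; and $g$ is a median homomorphism by Corollary~\ref{cor:mpo}, being a monotone map between chains. Therefore the composite $f=g\circ\pi_j$ is a median homomorphism.

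For necessity I would invoke Proposition~\ref{prop:jgq} with $\alg{A}=\alg{C}_1\times\cdots\times\alg{C}_n$ (so that the proposition's $k$ equals $n$) and with $\alg{B}=\alg{D}$ read as the one-factor product $\alg{D}_1=\alg{D}$ (so that the proposition's index $n$ equals $1$). The proposition then supplies a map $\sigma:[1]\to[n]$ together with a monotone map $f_1:\alg{C}_{\sigma(1)}\to\alg{D}$ such that $f(x_1,\ldots,x_n)=f_1(x_{\sigma(1)})$ for all $(x_1,\ldots,x_n)\in\alg{A}$. Setting $j:=\sigma(1)$ and $g:=f_1$ yields precisely $f=g\circ\pi_j$ with $g$ monotone, which is the desired conclusion.

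The only point requiring any care is the index bookkeeping in the specialization of Proposition~\ref{prop:jgq}: one must regard the codomain $\alg{D}$ as a one-factor product, so that the selector $\sigma$ has singleton domain $[1]$ and the componentwise decomposition $f=(f_{\sigma(1)},\ldots,f_{\sigma(n)})$ collapses to the single map $f_1\circ\pi_{\sigma(1)}$. There is no genuine obstacle here; once the indices of the two statements are matched correctly, the corollary is an immediate consequence of the proposition.
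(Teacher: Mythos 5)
Your proof is correct and matches the paper's intent: the corollary is stated there without a separate proof, precisely because it is the specialization of Proposition~\ref{prop:jgq} to a one-factor codomain, which is exactly what you carry out (together with the routine verification of sufficiency via Corollary~\ref{cor:mpo}).
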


In the particular case of Boolean algebras, Proposition \ref{prop:jgq} can be restated as in the following corollary. 

\begin {corollary}\label{cor:bool01}
Assume that $f:\alg{2}^n \rightarrow \alg{2}^m$ is a map between two finite Boolean algebras. The map $f$ is a median homomorphism if and only if there are  $\sigma: [m] \to ([n] \cup\{\bot\})$ and $\varepsilon:[m]\to  \{\mathrm{id},\neg\}$ such that \[f: (x_1, \ldots, x_n) \mapsto (\varepsilon_1 x_{\sigma_1}, \ldots, \varepsilon_m x_{\sigma_m}),\]
where $x_\bot$ is defined as the constant map $0$.
\end {corollary}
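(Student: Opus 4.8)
The plan is to obtain Corollary \ref{cor:bool01} as the specialization of Proposition \ref{prop:jgq} to the case in which every chain involved is the two-element chain $\alg{2}$. Writing $\alg{2}^n=\alg{C}_1\times\cdots\times\alg{C}_n$ with each $\alg{C}_j=\alg{2}$ and $\alg{2}^m=\alg{D}_1\times\cdots\times\alg{D}_m$ with each $\alg{D}_i=\alg{2}$, Proposition \ref{prop:jgq} tells me that $f$ is a median homomorphism if and only if $f=(f_{\sigma(1)},\ldots,f_{\sigma(m)})$ for some map $\sigma\colon[m]\to[n]$ and monotone maps $f_i\colon\alg{2}\to\alg{2}$. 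Thus the entire content of the corollary reduces to re-encoding the data $(\sigma,(f_i)_{i\in[m]})$ in the form $(\sigma,\varepsilon)$ demanded in the statement.

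The key observation is that on the two-element chain every self-map is monotone (each of the four maps $\alg{2}\to\alg{2}$ is either isotone or antitone), and there are exactly four of them: the identity, the negation $\neg$, the constant map $0$, and the constant map $1$. I would simply match these four maps against the four admissible pairs $(\sigma_i,\varepsilon_i)$. When $f_i$ is the identity or the negation, I keep the coordinate dependence by setting $\sigma_i:=\sigma(i)\in[n]$ and choosing $\varepsilon_i=\mathrm{id}$ or $\varepsilon_i=\neg$ accordingly, so that the $i$-th component of $f(x)$ is $x_{\sigma_i}$ or $\neg x_{\sigma_i}$. When $f_i$ is constant, its value no longer depends on any coordinate, so I set $\sigma_i:=\bot$ and use $\varepsilon_i=\mathrm{id}$ to produce $x_\bot=0$ and $\varepsilon_i=\neg$ to produce $\neg x_\bot=1$. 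This yields exactly the representation claimed in the corollary.

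For the converse, I would run the same dictionary backwards: any choice of $\sigma\colon[m]\to([n]\cup\{\bot\})$ and $\varepsilon\colon[m]\to\{\mathrm{id},\neg\}$ defines a map whose $i$-th component is a monotone self-map of $\alg{2}$ precomposed with a projection (or a constant map, when $\sigma_i=\bot$). Such a map is of the form described in Proposition \ref{prop:jgq}, hence a median homomorphism. The argument is purely a matter of bookkeeping; the only point requiring care is the treatment of the constant components, and the device of adjoining the symbol $\bot$ (with $x_\bot:=0$) is precisely what absorbs them cleanly into $\sigma$. I do not anticipate any genuine obstacle beyond this re-indexing.
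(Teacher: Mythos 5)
Your proposal is correct and matches the paper's treatment: the paper derives Corollary \ref{cor:bool01} directly as the specialization of Proposition \ref{prop:jgq} to the case where all chains are $\alg{2}$, and your dictionary between the four monotone self-maps of $\alg{2}$ and the pairs $(\sigma_i,\varepsilon_i)$ (with $\bot$ absorbing the constant components) is exactly the intended bookkeeping.
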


\begin {corollary} %Assume that $\alg{A}\cong\alg{2}^n$ is a finite Boolean algebra. 
\begin{enumerate}\label{cor:bool02}
\item The Boolean functions on $\alg{2}^n$ that are median homomorphisms are exactly the constant functions, the projection maps $\pi\colon\alg{2}^n \to \alg{2}$ and the negations of the projection maps.
\item\label{it:jhh02} A map $f\colon\alg{2}^n \rightarrow \alg{2}^n$ is a median isomorphism if and only if there is a permutation $\sigma$ of $[n]$ and an element $\varepsilon$ of $\{\mathrm{id}, \neg\}^n$ such that $f(x_1, \ldots, x_n)=(\varepsilon_1 x_{\sigma(1)}, \ldots, \varepsilon_n x_{\sigma(n)})$ for any $(x_1, \ldots, x_n)$ in $\alg{2}^n$.
\end{enumerate}
\begin {remark}
As kindly noticed by the reviewer, Corollaries  \ref{cor:bool01} and \ref{cor:bool02} follow from  properties of congruence distributive varieties generated by a finite simple algebra. For instance, it can be shown that if $\alg{A}$ is a finite simple algebra that generates a congruence distributive variety and if $f\colon \alg{A}^n \to \alg{A}^n$ is an isomorphism, then there exist a permutation $\sigma$ of $[n]$ and  automorphisms $\varepsilon_1, \ldots, \varepsilon_n$ of $\alg{A}$  such that $f(x_1, \ldots, x_n)=(\varepsilon_1 x_{\sigma(1)}, \ldots, \varepsilon_n x_{\sigma(n)})$ for every $(x_1, \ldots, x_n) \in \alg{A}^n$. Since the variety of median algebras has a near-unanimity term, it is congruence distributive (see \cite[Theorem 2]{Mitschke1978}) and hence Corollary \ref{cor:bool02}.\ref{it:jhh02} can be obtained from  the latter result.
\end {remark}

\end {corollary}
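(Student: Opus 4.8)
The plan is to derive both assertions directly from Corollary \ref{cor:bool01} by specializing the codomain, and, for the second assertion, by adding an elementary bijectivity analysis. No new machinery is needed beyond Corollary \ref{cor:bool01}.

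For the first assertion, a Boolean function on $\alg{2}^n$ is a map $f\colon \alg{2}^n \to \alg{2}$, i.e.\ the case $m=1$ of Corollary \ref{cor:bool01}. In that case the data reduce to a single index $\sigma(1) \in [n]\cup\{\bot\}$ and a single $\varepsilon_1 \in \{\mathrm{id},\neg\}$, so that $f$ is a median homomorphism if and only if $f(x_1,\dots,x_n) = \varepsilon_1 x_{\sigma(1)}$. I would then split into cases: if $\sigma(1)=\bot$ then $x_{\sigma(1)}$ is the constant $0$, so $f$ is the constant $0$ (when $\varepsilon_1=\mathrm{id}$) or the constant $1$ (when $\varepsilon_1=\neg$); if $\sigma(1)=j\in[n]$ then $f=\pi_j$ (when $\varepsilon_1=\mathrm{id}$) or $f=\neg\pi_j$ (when $\varepsilon_1=\neg$). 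Conversely, each of these four families is visibly of the form prescribed by Corollary \ref{cor:bool01}, hence a median homomorphism. This exhausts the list and establishes the first assertion.

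For the second assertion, I would first recall that, as for any algebras, a median isomorphism is exactly a bijective median homomorphism, since the set-inverse of a bijective homomorphism automatically preserves $\me$. Applying Corollary \ref{cor:bool01} with $m=n$, any median homomorphism $f\colon\alg{2}^n\to\alg{2}^n$ has the form $f(x)=(\varepsilon_1 x_{\sigma(1)},\dots,\varepsilon_n x_{\sigma(n)})$. As each $\varepsilon_i$ acts as a bijection on the $i$-th factor, $f$ is a bijection if and only if the underlying coordinate map $g\colon x\mapsto (x_{\sigma(1)},\dots,x_{\sigma(n)})$ is a bijection. I would then show that $g$ is bijective precisely when $\sigma$ takes no value $\bot$ and is injective, hence a permutation of $[n]$: if some $\sigma(i)=\bot$ the $i$-th output coordinate is constant and $g$ is not surjective, while if $\sigma$ misses some index $k\in[n]$ then the input coordinate $x_k$ is irrelevant and $g$ is not injective; by finiteness of $[n]$, injectivity of $\sigma$ then forces surjectivity onto $[n]$. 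Conversely, any permutation $\sigma$ together with any $\varepsilon\in\{\mathrm{id},\neg\}^n$ yields a bijection of the required form. Combining this with the reduction to $g$ gives the second assertion.

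The verifications are almost entirely bookkeeping once Corollary \ref{cor:bool01} is in hand, so the only genuine step is the bijectivity criterion for the second assertion, namely the observation that a $\bot$-value or a missing index in $\sigma$ breaks surjectivity or injectivity of $g$. Everything else (the four cases of the first assertion, the forward implications, and the fact that a bijective homomorphism of algebras is an isomorphism) is immediate.
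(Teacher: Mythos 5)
Your proposal is correct and follows essentially the same route as the paper, which presents this statement as an immediate consequence of Corollary \ref{cor:bool01} (specializing to $m=1$ for the first item and $m=n$ plus a bijectivity check for the second); your bijectivity analysis of $\sigma$ is exactly the bookkeeping the paper leaves implicit. The remark's alternative derivation via congruence distributive varieties is a different, more general argument, but your elementary specialization is the intended one.
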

\section{Concluding remarks and further research directions}
In this paper we have described conservative median algebras and semilattices with at least five elements in terms of forbidden configurations and have given a representation by chains. We have also characterized median homomorphisms between finite products of these algebras, showing that they are essentially determined componentwise. The next step in this line of research is to extend our results to larger classes of median algebras and their ordered counterparts. The topological duality for the variety of median algebras recalled in this paper may again turn out to be a valuable tool.

Another research direction would be to turn the representation theorem stated in Proposition \ref{prop:nzf} into a dual equivalence, and to use this equivalence to describe existentially and algebraically closed elements in the category of conservative median algebras by following the ideas developed in \cite[Chapter 5]{NDFWA}.

\section*{Acknowledgment}
We would like to thank an anonymous referee for his careful comments which helped to improve the readability of the paper.

This work was supported  by the internal research project F1R-MTHPUL-15MRO3 of the University of Luxembourg.

\end{document}